\newtheorem{theorem}{Theorem}[section]
\newtheorem{lemma}[theorem]{Lemma}
\newtheorem{proposition}[theorem]{Proposition}
\newtheorem{remark}[theorem]{Remark}
\numberwithin{equation}{section}
\begin{document}

\title[Determining coefficients of thermoelastic system]{Determining coefficients of thermoelastic system from boundary information}

\author{Xiaoming Tan}
\address{School of Mathematics and Statistics, Beijing Institute of Technology, Beijing 100081, China}
\email{xtan@bit.edu.cn}

\subjclass[2020]{35R30, 74F05, 74E05, 58J32, 58J40}

\keywords{Thermoelastic system; Thermoelastic Calder\'{o}n's problem; Thermoelastic Dirichlet-to-Neumann map; Inverse problems; Pseudodifferential operators.\\
{\bf ---------------}\\
Xiaoming Tan\\
{\it School of Mathematics and Statistics, Beijing Institute of Technology, Beijing 100081, China.}\\
{\it Email address}: xtan@bit.edu.cn}

\begin{abstract}
    Given a compact Riemannian manifold $(M,g)$ with smooth boundary $\partial M$, we give an explicit expression for full symbol of the thermoelastic Dirichlet-to-Neumann map $\Lambda_g$ with variable coefficients $\lambda,\mu,\alpha,\beta \in C^{\infty}(\bar{M})$. We prove that $\Lambda_g$ uniquely determines partial derivatives of all orders of the coefficients on the boundary. Moreover, for a nonempty open subset $\Gamma\subset\partial M$, suppose that the manifold and the coefficients are real analytic up to $\Gamma$, we show that $\Lambda_g$ uniquely determines the coefficients on the whole manifold $\bar{M}$.
\end{abstract}

\maketitle 

\section{Introduction}

\addvspace{5mm}

In this paper, we will study the  thermoelastic Calder\'{o}n problem, that is, whether one can uniquely determine the Lam\'{e} coefficients $\lambda,\mu$ and the other two physical coefficients $\alpha,\beta$ of a thermoelastic body by boundary information? Let $(M,g)$ be a compact Riemannian manifold of dimension $n$ with smooth boundary $\partial M$. We consider the manifold $M$ as an inhomogeneous, isotropic, thermoelastic body. Assume that the coefficient $\beta \in C^{\infty}(\bar{M})$, the Lam\'{e} coefficients $\lambda,\mu \in C^{\infty}(\bar{M})$ and the heat conduction coefficient $\alpha\in C^{\infty}(\bar{M})$ of the thermoelastic body satisfy $\mu > 0, \lambda + \mu \geqslant 0$ and $\alpha>0$.

For the displacement vector field $\bm{u} \in (C^{\infty}( M))^{n}$ and the temperature variation $\theta \in C^{\infty}( M)$, we define the thermoelastic operator $T_g$ with variable coefficients as (cf. \cite{Liu19,LiuTan22.2,TanLiu22,Kupr80})
\begin{align}\label{1.2}
    T_g  
    \begin{bmatrix}
        \bm{u}\\
        \theta
    \end{bmatrix}:=
    \begin{bmatrix}
        L_g + \rho\omega^2 & -\beta\operatorname{grad} \\[2mm]
        i\omega\theta_{0}\beta\operatorname{div} & \alpha\Delta_{g} + i\omega \gamma
    \end{bmatrix}
    \begin{bmatrix}
        \bm{u}\\
        \theta
    \end{bmatrix},
\end{align}
where the Lam\'{e} operator $L_g$ with variable coefficients is defined by (see \cite{TanLiu22})
\begin{align}\label{1.3}
    L_g \bm{u} 
    &:= \mu \Delta^{}_{B} \bm{u} + (\lambda + \mu)\operatorname{grad}\operatorname{div} \bm{u} + \mu \operatorname{Ric}(\bm{u}) \notag\\
    &\qquad + (\operatorname{grad} \lambda) \operatorname{div} \bm{u} + (S\bm{u})(\operatorname{grad} \mu).
\end{align}
Here we denote by $\operatorname{grad},\operatorname{div},\Delta_{g},\Delta_{B}$ and $\operatorname{Ric}$, respectively, the gradient operator, the divergence operator, the Laplace--Beltrami operator, the Bochner Laplacian and the Ricci tensor with respect to the metric $g$. The stress tensor $S$ (also called deformation tensor) of type $(1,1)$ is defined by (see \cite[p.\,562]{Taylor11.3})
\begin{align*}
    S\bm{u}:=\nabla \bm{u} + \nabla \bm{u}^t,
\end{align*}
the coefficient $\beta\in C^{\infty}(\bar{M})$ depends on Lam\'{e} coefficients and linear expansion coefficient of the thermoelastic body, $\gamma$ is the specific heat per unit volume, $\theta_{0}$ is the reference temperature, $\rho$ is the density of the thermoelastic body, $\omega$ is the angular frequency and $i = \sqrt{-1}$. In particular, the Lam\'{e} operator with constant coefficients has the form $L\textbf{\textit{u}}=\mu\Delta\textbf{\textit{u}}+(\lambda+\mu)\nabla(\nabla\cdot\textbf{\textit{u}})$ in Euclidean domains (see \cite{Kupr80,LandLif86}).

\addvspace{3mm}

We consider the following Dirichlet boundary value problem for the thermoelastic system
\begin{equation}\label{1.1}
    \begin{cases}
        T_g  \bm{U} = 0 \quad & \text{in}\  M, \\
        \bm{U} = \bm{V} & \text{on}\ \partial  M,
    \end{cases}
\end{equation}
where $\textbf{\textit{U}} = (\textbf{\textit{u}},\theta)^t$ and the superscript $t$ denotes the transpose. Problem \eqref{1.1} is an extension of the boundary value problem for classical elastic system. In particular, when $ M$ is a bounded Euclidean domain and the temperature is not taken into consideration, problem \eqref{1.1} reduces to the corresponding problem for classical elastic system.

For any boundary value $\bm{V}\in (H^{1/2}(\partial M))^{n+1}$, there is a unique solution $\bm{U}$ solves the above problem \eqref{1.1} by the theory of elliptic operators. Therefore, we define the thermoelastic Dirichlet-to-Neumann map $\Lambda_g :(H^{1/2}(\partial M))^{n+1} \to (H^{-1/2}(\partial M))^{n+1}$ associated with the thermoelastic operator $T_g$ as (see \cite{LiuTan22.2})
\begin{align}\label{1.4}
    \Lambda_g  (\bm{U}|_{\partial  M}) :=
    \begin{bmatrix}
        \lambda\nu \operatorname{div} + \mu\nu S & -\beta \nu \\[2mm]
        0 & \alpha\partial_\nu
    \end{bmatrix}
    \bm{U} \quad \text{on}\ \partial  M,
\end{align}
where $\nu$ is the outward unit normal vector to the boundary $\partial M$. The thermoelastic Dirichlet-to-Neumann map $\Lambda_g$ is an elliptic, self-adjoint pseudodifferential operator of order one defined on the boundary. In this paper, we will study the thermoelastic Calder\'{o}n problem on a Riemannian manifold, which is determining the coefficients $\lambda,\mu,\alpha,\beta\in C^{\infty}(\bar{M})$ by the thermoelastic Dirichlet-to-Neumann map $\Lambda_g$. By giving explicit expressions for $\Lambda_g$ and its full symbol $\sigma(\Lambda_g)$, we show that $\Lambda_g$ uniquely determines the coefficients $\lambda,\mu,\alpha,\beta$.

\addvspace{3mm}

We briefly recall some uniqueness results for the classical Calder\'{o}n problem and the elastic Calder\'{o}n problem. The classical Calder\'{o}n problem \cite{Cald80}: whether one can uniquely determine the electrical conductivity of a medium by making voltage and current measurements at the boundary of the medium? This problem has been studied for decades. For a bounded Euclidean domain $\Omega\subset\mathbb{R}^n$ with smooth boundary $\partial\Omega$, $n\geqslant 2$, Kohn and Vogelius \cite{KohnVoge84} proved a famous uniqueness result on the boundary for $C^{\infty}$-conductivities, that is, if $\Lambda_{\gamma_1}=\Lambda_{\gamma_2}$, then $\frac{\partial^{|J|} \gamma_1}{\partial x^J}\big|_{\partial \Omega}=\frac{\partial^{|J|} \gamma_2}{\partial x^J}\big|_{\partial \Omega}$ for all multi-indices $J\in\mathbb{N}^n$. This settled the uniqueness problem on the boundary in the real analytic category. They extended the uniqueness result to piecewise real analytic conductivities in \cite{KohnVoge85}. In dimensions $n\geqslant 3$, in the celebrated paper \cite{SylvUhlm87} Sylvester and Uhlmann proved the uniqueness of the $C^{\infty}$-conductivities by constructing the complex geometrical optics solutions. The classical Calder\'{o}n problem have attracted lots of attention for decades (see, for example, \cite{AstPai06,Nachman96,CaroRogers16,Haberata13,KohnVoge84,KohnVoge85,Nachman96,SylvUhlm87,ImaUhlmYama10,ALP05,SunUhl03,SunUhl97,LeeUhlm89} and references therein). We also refer the reader to the survey articles \cite{Uhlm09,Uhlm14} for the classical Calder\'{o}n problem and related topics.

For the elastic Calder\'{o}n problem, partial uniqueness results for determination of Lam\'{e} coefficients from boundary measurements were obtained. For a bounded Euclidean domain $\Omega\subset\mathbb{R}^n$ with smooth boundary $\partial\Omega$, Nakamura and Uhlmann \cite{NakaUhlm95} proved that one can determine the full Taylor series of Lam\'{e} coefficients on the boundary in all dimensions $n\geqslant 2$ and for a generic anisotropic elastic tensor in two dimensions. In \cite{ImaYama15} Imanuvilov and Yamamoto also proved the global uniqueness of the Lam\'{e} coefficients $\lambda,\mu \in C^{10}(\bar{\Omega})$. In three dimensional Euclidean domains, Nakamura and Uhlmann \cite{NakaUhlm94,NakaUhlm03} as well as Eskin and Ralston \cite{EskinRalston02} proved the global uniqueness of Lam\'{e} coefficients provided that $\nabla\mu$ is small in a suitable norm. However, in dimensions $n\geqslant 3$, the global uniqueness of the Lam\'{e} coefficients $\lambda,\mu\in C^{\infty}(\bar{\Omega})$ without the smallness assumption ($\|\nabla\mu\| < \varepsilon_0$ for some small positive $\varepsilon_0$) remains an open problem (see \cite[p.\,210]{Isakov17}). We also refer the reader to \cite{ANS91,NakaUhlm93,ImaYama11,ImaUhlmYama12} for the elastic Calder\'{o}n problem.

Recently, Tan and Liu \cite{TanLiu22} gave an explicit expression for full symbol of the elastic Dirichlet-to-Neumann map on a Riemannian manifold $M$ and showed that the elastic Dirichlet-to-Neumann map uniquely determines partial derivatives of all orders of the Lam\'{e} coefficients on the boundary. Moreover, for a nonempty open subset $\Gamma\subset\partial M$, suppose that the manifold and the Lam\'{e} coefficients are real analytic up to $\Gamma$, they proved that the elastic Dirichlet-to-Neumann map uniquely determines the Lam\'{e} coefficients on the whole manifold $\bar{M}$.

In mathematics, physics and engineering, there are lots of inverse problems have been studied for decades. Here we do not list all the references about these topics. We refer the reader to \cite{Liu19.2,Pichler18,CaroZhou14,JoshiMcDowall00,McDowall97,OlaPai93} for Maxwell's equations, to \cite{Liu20,HeckWangLi07,LiWang07,DKSU09,DKSU07,KrupUhlm18,KrupUhlm14,NSU95,PSU10} for incompressible fluid and many others. For the studies about other types of Dirichlet-to-Neumann map, we also refer the reader to \cite{Liu11,Liu15,Liu14,LiuTan21,LiuTan22.2} and references therein.

\addvspace{3mm}

For the sake of simplicity, we denote by $I_n$ the $n\times n$ identity matrix,
\begin{align*}
    [a^\alpha_\beta]:=
    \begin{bmatrix}
        a^1_1 & \dots & a^1_{n-1} \\
        \vdots & \ddots & \vdots \\
        a^{n-1}_1 & \dots & a^{n-1}_{n-1} \\
    \end{bmatrix},
\end{align*}
and
\begin{align*}
    \begin{bmatrix}
        [a^j_k] & [b^j] \\[2mm]
        [c_k] & d
    \end{bmatrix}
    :=
    \begin{bmatrix}
        [a^\alpha_\beta] & [a^\alpha_n] & [b^\alpha] \\[2mm]
        [a^n_\beta] & a^n_n & b^n \\[2mm]
        [c_\beta] & c_n & d
    \end{bmatrix}
    =
    \begin{bmatrix}
        a^1_1 & \dots & a^1_n & b^1 \\
        \vdots & \ddots & \vdots & \vdots \\
        a^n_1 & \dots & a^n_n & b^n \\
        c_1 & \dots & c_n & d \\
    \end{bmatrix},
\end{align*}
where $ 1 \leqslant \alpha,\beta \leqslant n-1$ and $1 \leqslant j,k \leqslant n$.

\addvspace{5mm}

The main results of this paper are the following three theorems.
\begin{theorem}\label{thm1.3}
    Let $(M,g)$ be a compact Riemannian manifold of dimension $n$ with smooth boundary $\partial M$. Assume that the coefficient $\beta\in C^{\infty}(\bar{M})$, the Lam\'{e} coefficients $\lambda,\,\mu \in C^{\infty}(\bar{M})$ and the heat conduction coefficient $\alpha\in C^{\infty}(\bar{M})$ satisfy $\mu > 0,\lambda + \mu \geqslant 0$ and $\alpha>0$. Let $\sigma(\Lambda_g ) \sim \sum_{j\leqslant 1} p_j(x,\xi^{\prime})$ be the full symbol of the thermoelastic Dirichlet-to-Neumann map $\Lambda_g$. Then
    \begin{align}
        \label{18} p_1&=
        \begin{bmatrix}
            \mu|\xi^{\prime}|I_{n-1} + \frac{\mu(\lambda+\mu)}{(\lambda+3\mu)|\xi^{\prime}|} [\xi^\alpha\xi_\beta] & -\frac{2i\mu^2}{\lambda+3\mu} [\xi^\alpha] & 0 \\[2mm]
            \frac{2i\mu^2}{\lambda+3\mu}[\xi_\beta] & \frac{2\mu(\lambda+2\mu)}{\lambda+3\mu}|\xi^{\prime}| & 0 \\[2mm]
            0 & 0 & \alpha |\xi^{\prime}|
        \end{bmatrix},\\[2mm]
        \label{19} p_0&=
        \begin{bmatrix}
            \mu I_{n-1} &0 &0  \\
            0& \lambda+2\mu &0 \\
            0& 0& \alpha
        \end{bmatrix}
        q_0-
        \begin{bmatrix}
            0 & 0 &0 \\
            \lambda [\Gamma^\alpha_{\alpha\beta}] & \lambda \Gamma^\alpha_{\alpha n} & -\beta\\
            0 & 0 &0
        \end{bmatrix},\\[2mm]
        \label{20} p_{-m}&=
        \begin{bmatrix}
            \mu I_{n-1} &0 &0  \\
            0& \lambda+2\mu &0 \\
            0& 0& \alpha
        \end{bmatrix}
        q_{-m},\quad m\geqslant 1,
    \end{align}
    where $i=\sqrt{-1}$, $\xi^{\prime}=(\xi_1,\dots,\xi_{n-1})$, $\xi^\alpha=g^{\alpha\beta}\xi_\beta$, $|\xi^{\prime}|=\sqrt{\xi^\alpha\xi_\alpha}$, and $q_{-m}\,(m\geqslant 0)$ are given by \eqref{3.1.1} in Section $\ref{s2}$.
\end{theorem}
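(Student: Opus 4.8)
The plan is to compute $\sigma(\Lambda_g)$ by factorizing the elliptic system $T_g$ near $\partial M$ and then substituting the resulting first-order factor into the definition \eqref{1.4}. First I would introduce boundary normal coordinates $(x',x_n)$ with $x_n\geqslant 0$ the geodesic distance to $\partial M$, so that $g=dx_n^2+g_{\alpha\beta}(x)\,dx^\alpha dx^\beta$ and the outward unit normal is $-\partial_{x_n}$; the Christoffel symbols $\Gamma^\alpha_{\alpha\beta},\Gamma^\alpha_{\alpha n}$ appearing in \eqref{19} are exactly those produced by writing $\operatorname{grad},\operatorname{div},\Delta_g,\Delta_B$ in these coordinates. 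I would then expand $T_g$ as a matrix differential operator in which the normal derivative is isolated,
\begin{align*}
    T_g=E(x)\partial_n^2+P_1(x,D')\partial_n+P_0(x,D'),\qquad
    E=\begin{bmatrix}\mu I_{n-1}&0&0\\0&\lambda+2\mu&0\\0&0&\alpha\end{bmatrix},
\end{align*}
where $E$ is the coefficient of $\partial_n^2$, $P_1$ is a tangential operator of order $\leqslant 1$ and $P_0$ of order $\leqslant 2$. The matrix $E$ is invertible precisely because $\mu>0$, $\lambda+2\mu=(\lambda+\mu)+\mu>0$ and $\alpha>0$, and it is the common left factor in \eqref{19}--\eqref{20}. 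The coupling terms $-\beta\operatorname{grad}$, $i\omega\theta_0\beta\operatorname{div}$ and the zeroth-order terms $\rho\omega^2$, $i\omega\gamma$ are of order $\leqslant 1$ in $\partial_n$ and hence enter only $P_1,P_0$: they leave the principal symbol unchanged but contribute to $p_0$ and below.

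Next I would carry out the operator factorization. Since $E$ is invertible I would seek a tangential pseudodifferential operator $B=B(x_n,x',D')$ of order one, depending smoothly on the parameter $x_n$, with
\begin{align*}
    E^{-1}T_g\equiv(\partial_n+A-B)(\partial_n+B)\quad\mathrm{mod}\ \Psi^{-\infty},\qquad A:=E^{-1}P_1.
\end{align*}
Expanding the right-hand side yields the operator Riccati equation $E^{-1}P_0=(\partial_n B)+AB-B^2$, which via the composition formula $\sigma(B^2)\sim\sum_\gamma\frac1{\gamma!}\partial_{\xi'}^\gamma b\,D_{x'}^\gamma b$ becomes a recursive hierarchy for the homogeneous components $b_1,b_0,b_{-1},\dots$ of the full symbol of $B$. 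The top equation is a quadratic matrix equation for $b_1$; I would pick the root whose spectrum lies in the right half-plane, which singles out the factor $(\partial_n+B)$ annihilating the solution that is smooth in the interior, so that this $\bm U$ satisfies $\partial_n\bm U\equiv-B\,\bm U$ on $\partial M$ modulo a smoothing operator. The boundary values of $b_{-m}$ are the quantities $q_{-m}$ $(m\geqslant 0)$ in the statement.

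Finally I would substitute $\partial_n\mapsto-B$ into the boundary operator \eqref{1.4}. Writing that operator as $-E\partial_n+R(x,D')$ — the sign coming from $\nu=-\partial_{x_n}$ — absorbs the normal-derivative contributions of $\lambda\nu\operatorname{div}+\mu\nu S$ and $\alpha\partial_\nu$ into $E$, while the tangential differential operator $R$ of order one carries the tangential derivatives of $\operatorname{div}$ and $S$, the Christoffel corrections, and the coupling $-\beta\nu$. Then $\Lambda_g\equiv EB+R$, so $\sigma(\Lambda_g)\sim E\sigma(B)+\sigma(R)$. Writing $r_1,r_0$ for the degree-one and degree-zero parts of $\sigma(R)$ — which exhaust $\sigma(R)$, since $R$ is a differential operator — the degree-one part gives $p_1=Eb_1+r_1$, which I would evaluate to \eqref{18}; the degree-zero part gives $p_0=Eq_0+r_0$, identifying the subtracted matrix in \eqref{19} with $-r_0$; and since $\sigma(R)$ has no negative-order part, $p_{-m}=Eq_{-m}$ for $m\geqslant 1$, which is \eqref{20}.

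I expect the main obstacle to be the leading Riccati equation for $b_1$. Because the displacement block of $T_g$ has principal symbol $-\mu|\xi|^2I_n-(\lambda+\mu)[\xi^j\xi_k]$ rather than a scalar matrix, $b_1$ is effectively the square root of a non-scalar matrix, and the mixed entries $\xi_n\xi_\alpha$ of $[\xi^j\xi_k]$ feed into $P_1$, hence into $A$; solving the quadratic with this coupling is what produces the rank-one correction $[\xi^\alpha\xi_\beta]$, the off-diagonal imaginary entries coupling the tangential and normal components of $\bm u$, and the $\lambda+3\mu$ denominators in \eqref{18}. The temperature block decouples at this order into the scalar $\alpha|\xi'|$, which is why $p_1$ is block diagonal in $\bm u$ and $\theta$. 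Once $b_1$ is in hand, each lower-order component $b_{-m}$ satisfies a linear Sylvester-type equation whose solvability follows from the spectral gap of $b_1$; these steps are mechanical but lengthy and yield the explicit formula \eqref{3.1.1} for $q_{-m}$.
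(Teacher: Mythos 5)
Your proposal is correct and takes essentially the same route as the paper: in boundary normal coordinates the paper writes $\Lambda_g = A\bigl(-\frac{\partial}{\partial x_n}\bigr) - D$ (your $-E\partial_n + R$ with $R=-D$), factorizes $A^{-1}T_g = \bigl(I_{n+1}\frac{\partial}{\partial x_n} + B - Q\bigr)\bigl(I_{n+1}\frac{\partial}{\partial x_n} + Q\bigr)$ with the symbol of $Q$ obtained from the same Riccati/Sylvester hierarchy and the same choice of positive-definite (right half-plane) root, and then reads off $p_1 = Aq_1 - d_1$, $p_0 = Aq_0 - d_0$, $p_{-m} = Aq_{-m}$, which are exactly your $p_1 = Eb_1 + r_1$, $p_0 = Eq_0 + r_0$, $p_{-m} = Eq_{-m}$. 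The differences are purely notational: your $E$ is the paper's $A$, your $B$ is the paper's $Q$, and your $A = E^{-1}P_1$ is the paper's $B$.
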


\addvspace{3mm}

For the case of the thermoelastic Dirichlet-to-Neumann map with constant coefficients on a Riemannian manifold, the corresponding full symbol had been obtained in \cite{LiuTan22.2}. For the case of the elastic Dirichlet-to-Neumann map constant coefficients, the corresponding full symbol had been obtained in \cite{Liu19}. The principal symbol of the thermoelastic Dirichlet-to-Neumann map had also be studied in \cite{Vodev22} and \cite{ZhangY20} in the context of the thermoelastic wave equations in Euclidean setting.

By studying the full symbol of the thermoelastic Dirichlet-to-Neumann map $\Lambda_g$, we prove the following result:

\begin{theorem}\label{thm1.1}
    Let $(M,g)$ be a compact Riemannian manifold of dimension $n$ with smooth boundary $\partial M$. Assume that the coefficient $\beta\in C^{\infty}(\bar{M})$, the Lam\'{e} coefficients $\lambda,\,\mu \in C^{\infty}(\bar{M})$ and the heat conduction coefficient $\alpha\in C^{\infty}(\bar{M})$ satisfy $\mu > 0,\lambda + \mu \geqslant 0$ and $\alpha>0$. Then the thermoelastic Dirichlet-to-Neumann map $\Lambda_g$ uniquely determines $\frac{\partial^{|J|} \lambda}{\partial x^J}$, $\frac{\partial^{|J|} \mu}{\partial x^J}$, $\frac{\partial^{|J|} \alpha}{\partial x^J}$ and $\frac{\partial^{|J|} \beta}{\partial x^J}$ on the boundary for all multi-indices $J$.
\end{theorem}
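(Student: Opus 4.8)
The plan is to leverage Theorem~\ref{thm1.3}, which gives the full symbol $\sigma(\Lambda_g)\sim\sum_{j\leqslant 1}p_j(x,\xi^{\prime})$ in boundary normal coordinates, and to recover the boundary jets of the coefficients symbol by symbol. The underlying principle is the standard one behind all Kohn--Vogelius-type boundary determination results: the full symbol of a classical pseudodifferential operator is a local invariant, so that $\Lambda_{g}^{(1)}=\Lambda_{g}^{(2)}$ forces $p_j^{(1)}(x,\xi^{\prime})=p_j^{(2)}(x,\xi^{\prime})$ for every $j\leqslant 1$ and every boundary point $x\in\partial M$, where the superscripts indicate the two sets of coefficients to be compared. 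The task is then to show that reading off these symbols, together with their derivatives in the tangential covariable $\xi^{\prime}$ and, crucially, their normal derivatives encoded in the recursive structure of the $q_{-m}$, determines $\lambda,\mu,\alpha,\beta$ and all their normal and tangential derivatives at each boundary point.

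First I would extract the coefficients themselves from the principal symbol $p_1$ in \eqref{18}. The scalar $(3,3)$-entry is $\alpha|\xi^{\prime}|$, so $\alpha|_{\partial M}$ is immediate. The $(2,2)$-block combination and the $\frac{\mu(\lambda+\mu)}{(\lambda+3\mu)|\xi^{\prime}|}[\xi^\alpha\xi_\beta]$ term, read against the $\mu|\xi^{\prime}|I_{n-1}$ diagonal part, give $\mu$ and the ratio $\frac{\lambda+\mu}{\lambda+3\mu}$ (equivalently $\frac{\lambda+2\mu}{\lambda+3\mu}$ from the $(2,2)$-entry), from which $\lambda|_{\partial M}$ is recovered since $\mu>0$ and $\lambda+\mu\geqslant 0$ keep the relevant expressions nondegenerate. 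Next I would turn to $p_0$ in \eqref{19}: the coefficient $\beta$ appears explicitly as the $(2,3)$-entry $\beta$ in the second matrix, so $\beta|_{\partial M}$ is determined once $q_0$ is known; and $q_0$ is itself a universal function of the already-recovered boundary values of $\lambda,\mu,\alpha$ and of the metric through the Christoffel symbols. Thus the leading two symbols pin down all four coefficients on $\partial M$.

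The inductive step is the heart of the argument: I would show that $p_{-m}$ determines the normal derivatives $\partial_n^{m+1}\lambda$, $\partial_n^{m+1}\mu$, $\partial_n^{m+1}\alpha$, $\partial_n^{m+1}\beta$ at the boundary, given that all lower-order normal derivatives are already known. The mechanism is the recursion defining the $q_{-m}$ (equation \eqref{3.1.1}): each $q_{-m}$ is built from $p_1,\dots,p_{-m+1}$ and from the boundary symbols of the operator $T_g$, and the only genuinely new ingredient entering at step $m$ is the $(m+1)$-st normal derivative of the coefficients, which appears linearly and with an invertible coefficient coming from the nondegenerate factor matrix $\operatorname{diag}(\mu I_{n-1},\lambda+2\mu,\alpha)$ multiplying $q_{-m}$ in \eqref{20}. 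I would argue that once all lower-order jets and all tangential derivatives (obtained by differentiating the already-determined symbols in $x^{\prime}$) are fixed, equating $p_{-m}^{(1)}=p_{-m}^{(2)}$ yields a triangular linear system for the top-order normal derivatives that can be solved uniquely. Combining tangential differentiation with this normal-derivative recursion then recovers $\frac{\partial^{|J|}\lambda}{\partial x^J}$, $\frac{\partial^{|J|}\mu}{\partial x^J}$, $\frac{\partial^{|J|}\alpha}{\partial x^J}$ and $\frac{\partial^{|J|}\beta}{\partial x^J}$ for every multi-index $J$, since any mixed derivative is a tangential derivative of a pure normal derivative in these coordinates.

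The main obstacle I anticipate is the inductive step, specifically verifying that the new normal-derivative contribution to $q_{-m}$ genuinely decouples from the lower-order data and enters with an invertible coefficient, so that the system is solvable. This requires a careful bookkeeping of how $\partial_n^{m+1}$ of the coefficients propagates through the symbol recursion \eqref{3.1.1}, and in particular checking that the coupling introduced by the off-diagonal $\beta$-terms in $T_g$ (the $-\beta\operatorname{grad}$ and $i\omega\theta_0\beta\operatorname{div}$ entries) does not obstruct the triangular structure. The nondegeneracy conditions $\mu>0$, $\lambda+\mu\geqslant 0$, $\alpha>0$, together with the explicit invertible factor $\operatorname{diag}(\mu I_{n-1},\lambda+2\mu,\alpha)$ in \eqref{20}, are exactly what guarantee this solvability; I would make the invertibility of the relevant linear map on top-order normal derivatives explicit, mirroring the strategy used for the elastic case in \cite{TanLiu22} but accounting for the additional thermal coupling.
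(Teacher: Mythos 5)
Your overall route --- read $\lambda,\mu,\alpha$ off $p_1$, extract $\beta$ from $p_0$, then induct on normal derivatives through the symbol recursion --- is the same as the paper's, but there is a genuine gap at the $\beta$ step. You assert that ``$q_0$ is itself a universal function of the already-recovered boundary values of $\lambda,\mu,\alpha$ and of the metric,'' and then solve $d_0=Aq_0-p_0$ for the $-\beta$ entry. That assertion is false: $q_0$ depends on $\beta$ itself, because the thermal coupling enters $B_0$ (the entries $-\beta/(\lambda+2\mu)$ and $i\omega\beta\theta_0/\alpha$) and $c_1$ (the entries $-\tfrac{i\beta}{\mu}[\xi^\alpha]$ and $-\tfrac{\omega\beta\theta_0}{\alpha}[\xi_\beta]$), hence enters $E_1$ in \eqref{2.11} and therefore $q_0$; the paper's formula \eqref{3.09} displays $\beta$ explicitly in the last row and last column of $q_0$. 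Your argument is thus circular: to know $q_0$ you need $\beta$, and to extract $\beta$ from $p_0$ you need $q_0$. The circle can be broken, but only by computing how the two $\beta$-contributions combine in $p_0=Aq_0-d_0$: the paper finds $(p_0)^n_{n+1}=(\lambda+2\mu)\bigl(-\tfrac{\beta}{\lambda+3\mu}\bigr)+\beta=\tfrac{\beta\mu}{\lambda+3\mu}$, a nonzero multiple of $\beta$ precisely because $\mu>0$. Had the relevant entry of $q_0$ carried the factor $-\beta/(\lambda+2\mu)$ instead of $-\beta/(\lambda+3\mu)$, the contributions would cancel and $p_0$ would say nothing about $\beta$; ruling out this cancellation is exactly the computation your proposal omits.

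A related inaccuracy weakens your inductive step. You attribute solvability for the top-order normal derivatives to the invertible factor $\operatorname{diag}(\mu I_{n-1},\lambda+2\mu,\alpha)$ in \eqref{20}. That factor only converts knowledge of $p_{-m}$ into knowledge of $q_{-m}$; the substantive question is whether the resulting $E_{-m+1}$, which equals $\partial_n^{m}q_0$ modulo already-determined terms, pins down $\partial_n^{m}\alpha$ and $\partial_n^{m}\beta$. In the paper this is settled by the two specific entries \eqref{3.12}--\eqref{3.13}: $(\partial q_0/\partial x_n)^n_{n+1}$ determines $\partial_n\beta$ with pivot $-1/(\lambda+3\mu)$, and then $(\partial q_0/\partial x_n)^{n+1}_n$ determines $\partial_n\alpha$ with pivot $-\beta\mu/\bigl(\alpha^2(\lambda+3\mu)\bigr)$ --- a triangular system whose pivots come from the explicit form \eqref{3.09} of $q_0$, not from the invertibility of $A$, and whose pivot for $\alpha$ is proportional to $\beta$ rather than to any of $\mu$, $\lambda+2\mu$, $\alpha$ that you invoke. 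So the nondegeneracy you correctly flag as the ``main obstacle'' is indeed the crux, but the mechanism you propose to discharge it is not the one that works; it requires carrying out the explicit symbol computation (for the Lam\'e jets one can, as the paper does, simply quote \cite{TanLiu22}, but the $\alpha,\beta$ entries have no such citation to fall back on).
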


The uniqueness result in Theorem \ref{thm1.1} can be extended to the whole manifold for real analytic setting.

\begin{theorem}\label{thm1.2}
    Let $(M,g)$ be a compact Riemannian manifold of dimension $n$ with smooth boundary $\partial M$, and let $\Gamma\subset \partial M$ be a nonempty open subset. Suppose that the manifold is real analytic up to $\Gamma$ and the coefficients $\lambda,\mu,\alpha,\beta$ are also real analytic up to $\Gamma$ and satisfy $\mu>0$, $\lambda + \mu \geqslant 0$ and $\alpha>0$. Then the thermoelastic Dirichlet-to-Neumann map $\Lambda_g$ uniquely determines $\lambda$, $\mu$, $\alpha$ and $\beta$ on $\bar{M}$.
\end{theorem}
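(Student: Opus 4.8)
The plan is to combine the boundary determination of Theorem~\ref{thm1.1} with an analytic continuation argument. Suppose two admissible quadruples of coefficients $(\lambda_1,\mu_1,\alpha_1,\beta_1)$ and $(\lambda_2,\mu_2,\alpha_2,\beta_2)$ on the fixed background $(M,g)$ give rise to the same thermoelastic Dirichlet-to-Neumann map $\Lambda_g$. Then their full symbols $\sigma(\Lambda_g)\sim\sum_{j\leqslant 1}p_j$ coincide, so Theorem~\ref{thm1.1} yields
\[
    \frac{\partial^{|J|}\lambda_1}{\partial x^J}\Big|_{\partial M}=\frac{\partial^{|J|}\lambda_2}{\partial x^J}\Big|_{\partial M},
\]
and likewise for $\mu,\alpha,\beta$, for every multi-index $J$. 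In particular, all of these boundary jets agree at every point of the nonempty open subset $\Gamma\subset\partial M$. Note that here the metric $g$ is fixed and known, so there is no diffeomorphism ambiguity and it suffices to recover the four scalar coefficients.

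Next I would upgrade this boundary jet agreement to agreement in a full neighborhood of $\Gamma$ in $\bar M$. Fix a point $p\in\Gamma$ and choose boundary normal coordinates centered at $p$; since $g$ is real analytic up to $\Gamma$, these coordinates are themselves real analytic, and in them each coefficient is represented by a real analytic function on a half-ball $\{x_n\geqslant 0\}$. Knowing all partial derivatives of, say, $\lambda_1-\lambda_2$ at $p$ means that its full Taylor series (tangential and normal) at $p$ vanishes; by real analyticity up to the boundary this forces $\lambda_1\equiv\lambda_2$ on a one-sided neighborhood of $p$ in $\bar M$. Letting $p$ range over $\Gamma$ and doing the same for $\mu,\alpha,\beta$, I conclude that the two quadruples coincide on some open set $W\subset\bar M$ with $\Gamma\subset W$.

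Finally I would propagate this equality to all of $\bar M$ by the identity theorem for real analytic functions. Because $M$ is connected and each coefficient is real analytic on $M$, and the two quadruples already coincide on the nonempty open set $W\cap M$, they must coincide on all of $M$, and hence on $\bar M$ by continuity. This gives $\lambda_1=\lambda_2$, $\mu_1=\mu_2$, $\alpha_1=\alpha_2$ and $\beta_1=\beta_2$ on $\bar M$, which is precisely the assertion that $\Lambda_g$ determines the coefficients uniquely.

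The main obstacle is the second step: converting the boundary jet information supplied by Theorem~\ref{thm1.1} into a genuine germ of each coefficient near $\Gamma$. This is exactly where the hypothesis \emph{real analytic up to $\Gamma$} is indispensable — it guarantees not merely interior real analyticity but convergence of the Taylor expansion in the normal direction up to the boundary, so that the combined tangential-and-normal jet recovered on $\Gamma$ actually pins the function down on a one-sided neighborhood. One must also check that the boundary normal coordinates inherit the real analytic structure from the metric (so the coefficients remain real analytic in these coordinates), and invoke connectedness of $M$ to make the global continuation in the third step unobstructed; once these points are secured, the remaining arguments are standard.
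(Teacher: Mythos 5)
Your proposal is correct and follows essentially the same route as the paper: use Theorem \ref{thm1.1} to get agreement of all boundary jets on $\Gamma$, use real analyticity up to $\Gamma$ to convert the jet at a point of $\Gamma$ into agreement on a one-sided neighborhood, and then invoke unique continuation of real analytic functions (the paper's Lemma \ref{lem3.1}) together with connectedness to propagate equality to all of $M$ and, by continuity, to $\bar{M}$. Your write-up is somewhat more explicit than the paper's (two-quadruple formulation, real analyticity of boundary normal coordinates), but the underlying argument is identical.
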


\addvspace{3mm}

Theorem \ref{thm1.2} shows that the global uniqueness of real analytic coefficients on a real analytic Riemannian manifold. To the best of our knowledge, this is the first global uniqueness result for variable coefficients in thermoelasticity on a Riemannian manifold. It is clear that Theorem {\rm\ref{thm1.2}} also holds for a real analytic bounded Euclidean domain.

\addvspace{3mm}

The main ideas of this paper are as follows. Firstly, in \cite{Liu19} Liu established a method such that one can calculate the full symbol of the elastic Dirichlet-to-Neumann map with constant coefficients. In \cite{TanLiu22}, the full symbol of the elastic Dirichlet-to-Neumann map with variable coefficients was obtained. The full symbol of the thermoelastic Dirichlet-to-Neumann map with constant coefficients was obtained in \cite{LiuTan22.2}. Combining the methods and results in \cite{Liu19,TanLiu22,LiuTan22.2} we can deal with the case for variable coefficients in thermoelasticity. Then we flatten the boundary and induce a Riemannian metric in a neighborhood of the boundary and give a local representation for the thermoelastic Dirichlet-to-Neumann map $\Lambda_g$ with variable coefficients in boundary normal coordinates, that is,
\begin{align*}
    \Lambda_g  = A\Big(-\frac{\partial }{\partial x_n}\Big)-D,
\end{align*}
where $A$ and $D$ are two matrices. We then look for the following factorization for the thermoelastic operator $T_g$, and get
\begin{align*}
    A^{-1} T_g  
    = I_{n+1}\frac{\partial^2 }{\partial x_n^2} + B \frac{\partial }{\partial x_n} + C
    = \Bigl(I_{n+1}\frac{\partial }{\partial x_n} + B - Q\Bigr)\Bigl(I_{n+1}\frac{\partial }{\partial x_n} + Q\Bigr),
\end{align*}
where $B$, $C$ are two differential operators and $Q$ is a pseudodifferential operator. As a result, we obtain the equation
\begin{align*}
    Q^2 - BQ - \Bigl[\frac{\partial }{\partial x_n},Q\Bigr] + C = 0,
\end{align*}
where $[\cdot,\cdot]$ is the commutator. Finally, we solve the full symbol equation
\begin{align*}
    \sum_{J} \frac{(-i)^{|J|}}{J !} \partial_{\xi^{\prime}}^{J}q \, \partial_{x^\prime}^{J}q - \sum_{J} \frac{(-i)^{|J|}}{J !} \partial_{\xi^{\prime}}^{J}b \, \partial_{x^\prime}^{J}q - \frac{\partial q}{\partial x_n} + c = 0,
\end{align*}
which is a matrix equation, where the sum is over all multi-indices $J$, $\xi^{\prime}=(\xi_1,\dots,\xi_{n-1})$ and $x^\prime=(x_1,\dots,x_{n-1})$. Here $b$, $c$ and $q$ are the full symbols of the operators $B$, $C$ and $Q$, respectively. Thus, we obtain the full symbol $\sigma(\Lambda_g ) \sim \sum_{j\leqslant 1} p_j(x,\xi^{\prime})$ of $\Lambda_g$ from the full symbol of $Q$. Note that computations of the full symbols of matrix-valued pseudodifferential operators are quite difficult tasks. Generally, the above full symbol equation can not be exactly solved, in other words, there is not a general formula of the solution represented by the coefficients of the matrix equation. Hence, by overcoming the difficulties of computing the symbols of pseudodifferential operators and solving the symbol equation with variable coefficients, we develop the method of the previous work \cite{Liu19,TanLiu22,LiuTan22.2} to deal with the uniqueness of variable coefficients on the Riemannian manifold in thermoelasticity. The symbols $p_j(x,\xi^{\prime})$ contain the information about the coefficients $\lambda,\mu,\alpha,\beta$ and their derivatives on the boundary, thus we can prove that they can be uniquely determined by the thermoelastic Dirichlet-to-Neumann map. Furthermore, we prove that the coefficients can be uniquely determined on the whole manifold $\bar{M}$ by the thermoelastic Dirichlet-to-Neumann map provided the manifold and coefficients are real analytic.

This paper is organized as follows. In Section \ref{s2} we give an explicit expression of the thermoelastic Dirichlet-to-Neumann map $\Lambda_g$ in boundary normal coordinates and derive a factorization of the thermoelastic operator $T_g$ with variable coefficients, then we compute the full symbols of $\Lambda_g$ and the pseudodifferential operator $Q$. In Section \ref{s3} we prove Theorem \ref{thm1.3} and Theorem \ref{thm1.1} for boundary determination. Finally, Section \ref{s4} is devoted to proving Theorem \ref{thm1.2} for global uniqueness in real analytic setting.

\addvspace{10mm}

\section{Symbols of the pseudodifferential operators}\label{s2}

\addvspace{5mm}

Let $(M,g)$ be a compact Riemannian manifold of dimension $n$ with smooth boundary $\partial M$. In the local coordinates $\{x_j\}_{j=1}^n$, we denote by $\bigl\{\frac{\partial}{\partial x_j}\bigr\}_{j=1}^n$ and $\{dx_j\}_{j=1}^n$, respectively, the natural basis for the tangent space $T_x M$ and the cotangent space $T_x^{*} M$ at the point $x\in M$. In what follows, we will use the Einstein summation convention. The Greek indices run from 1 to $n-1$, whereas the Roman indices run from 1 to $n$, unless otherwise specified. Then the Riemannian metric $g$ is given by $g = g_{jk} \,dx_j\otimes dx_k$. 

Let $\nabla_j=\nabla_{\frac{\partial}{\partial x_j}}$ be the covariant derivative with respect to $\frac{\partial}{\partial x_j}$ and $\nabla^j= g^{jk} \nabla_k$. Then for displacement vector field $\bm{u}$, we denote by $\operatorname{div}$ the divergence operator, i.e.,
\begin{align}\label{0.01}
    \operatorname{div}\bm{u}=\nabla_j u^j = \frac{\partial u^j}{\partial x_j} + \Gamma^j_{jk} u^k,\quad \bm{u}=u^j\frac{\partial}{\partial x_j}\in \mathfrak{X}(M).
\end{align}
Here the Christoffel symbols 
\begin{align*}
    \Gamma^{m}_{jk} = \frac{1}{2} g^{ml} \Big(\frac{\partial g_{jl}}{\partial x_k} + \frac{\partial g_{kl}}{\partial x_j} - \frac{\partial g_{jk}}{\partial x_l}\Big),
\end{align*}
and $(g^{jk}) = (g_{jk})^{-1}$. For smooth function $f \in C^{\infty}(M)$, the gradient operator is given by
\begin{align}\label{0.02}
    \operatorname{grad} f = \nabla^j f \frac{\partial}{\partial x_j}
    = g^{jk} \frac{\partial f}{\partial x_j} \frac{\partial}{\partial x_k},\quad f \in C^{\infty}(M).
\end{align}
The Laplace--Beltrami operator is given by
\begin{equation}\label{2.7}
    \Delta_{g} f
    = g^{jk}\Bigl(\frac{\partial^2 f}{\partial x_j \partial x_k} - \Gamma^l_{jk}\frac{\partial f}{\partial x_l}\Bigr), \quad f \in C^{\infty}(M).
\end{equation}
The Lam\'{e} operator \eqref{1.3} with variable coefficients can be rewritten as (see \cite{TanLiu22})
\begin{align}\label{1.6}
    (L_g \bm{u})^j 
    &= \mu \Delta_{g}u^j + (\lambda + \mu)\nabla^j \nabla_k u^k + (\nabla^j \lambda) \nabla_k u^k + (\nabla^k \mu)(\nabla_k u^j + \nabla^j u_k) \notag\\
    &\quad + \mu g^{kl} \Bigl( 2\Gamma^j_{km} \frac{\partial u^m}{\partial x_l} + \frac{\partial \Gamma^j_{kl}}{\partial x_m} u^m \Bigr),\quad j=1,2,\dots,n.
\end{align}

\addvspace{3mm}

Here we briefly introduce the construction of geodesic coordinates with respect to the boundary $\partial M$ (see \cite{LeeUhlm89} or \cite[p.\,532]{Taylor11.2}). For each boundary point $x^{\prime} \in \partial M$, let $\gamma_{x^{\prime}}:[0,\varepsilon)\to \bar{M}$ denote the unit-speed geodesic starting at $x^{\prime}$ and normal to $\partial M$. If $ x^{\prime} := \{x_{1}, \ldots, x_{n-1}\}$ are any local coordinates for $\partial M$ near $x_0 \in \partial M$, we can extend them smoothly to functions on a neighborhood of $x_0$ in $\bar{M}$ by letting them be constant along each normal geodesic $\gamma_{x^{\prime}}$. If we then define $x_n$ to be the parameter along each $\gamma_{x^{\prime}}$, it follows easily that $\{x_{1}, \ldots, x_{n}\}$ form coordinates for $\bar{M}$ in some neighborhood of $x_0$, which we call the boundary normal coordinates determined by $\{x_{1}, \ldots, x_{n-1}\}$. In these coordinates $x_n>0$ in $M$, and $\partial M$ is locally characterized by $x_n=0$. A standard computation shows that the metric has the form $g = g_{\alpha\beta} \,dx_{\alpha} \,dx_{\beta} + dx_{n}^{2}$.

\begin{proposition}\label{prop2.1}
    In the boundary normal coordinates, the thermoelastic Dirichlet-to-Neumann map $\Lambda_g$ can be written as
    \begin{align}\label{2.8}
        \Lambda_g  = A\Bigl(-\frac{\partial }{\partial x_n}\Bigr)-D,
    \end{align}
    where
    \begin{align}
        \label{2.9}A&=
        \begin{bmatrix}
            \mu I_{n-1} &0 &0  \\
            0& \lambda+2\mu &0\\
            0& 0& \alpha
        \end{bmatrix},\\[2mm]
        \label{2.9.1}D&=
        \begin{bmatrix}
            0 & \mu \bigl[g^{\alpha\beta}\frac{\partial }{\partial x_\beta}\bigr] & 0\\[2mm]
            \lambda \bigl[\frac{\partial }{\partial x_\beta} + \Gamma^\alpha_{\alpha\beta}\bigr] & \lambda \Gamma^\alpha_{\alpha n} & -\beta\\[2mm]
            0 & 0 & 0
        \end{bmatrix}.
    \end{align}
\end{proposition}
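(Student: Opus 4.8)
The plan is to evaluate the boundary operator \eqref{1.4} directly in boundary normal coordinates, where the metric degenerates in the normal direction, and then to sort the resulting expression into the terms carrying a normal derivative $\partial/\partial x_n$, which will assemble into $A(-\partial/\partial x_n)$, and the terms carrying only tangential derivatives or no derivative, which will assemble into $-D$.

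First I would record the coordinate data used throughout. In boundary normal coordinates the metric splits as $g=g_{\alpha\beta}\,dx_\alpha\,dx_\beta+dx_n^2$, so that $g_{nn}=1$, $g_{\alpha n}=0$, $g^{nn}=1$, $g^{\alpha n}=0$, and the outward unit normal along $\{x_n=0\}$ is $\nu=-\partial/\partial x_n$, i.e. $\nu^n=-1$ and $\nu^\alpha=0$. From the Christoffel formula together with $g_{nn}\equiv 1$ and $g_{\alpha n}\equiv 0$ I would then derive the simplifications $\Gamma^n_{nk}=0$, $\Gamma^l_{nn}=0$, and $\Gamma^\alpha_{n\beta}=\tfrac12 g^{\alpha\gamma}\partial_n g_{\beta\gamma}$, so that in particular $\Gamma^j_{jk}=\Gamma^\alpha_{\alpha k}$. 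By \eqref{0.01} this gives $\operatorname{div}\bm{u}=\partial_\alpha u^\alpha+\partial_n u^n+\Gamma^\alpha_{\alpha\beta}u^\beta+\Gamma^\alpha_{\alpha n}u^n$.

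Next I would expand \eqref{1.4} one block at a time. The temperature equation contributes $\alpha\partial_\nu\theta=-\alpha\,\partial_n\theta$, which accounts for the $(n+1,n+1)$ entry of $A$ acting by $-\partial/\partial x_n$ together with the vanishing bottom row of $D$. The coupling term $-\beta\nu$, evaluated with $\nu^n=-1$, contributes $+\beta\theta$ to the $n$-th elastic component, matching the entry $-\beta$ of $D$. For the elastic traction I would use $\tau^j=\lambda(\operatorname{div}\bm{u})\nu^j+\mu(S\bm{u})^j_k\nu^k=-\lambda(\operatorname{div}\bm{u})\delta^j_n-\mu(S\bm{u})^j_n$, with $(S\bm{u})^j_k=\nabla_k u^j+\nabla^j u_k$, and treat the cases $j=n$ and $j=\alpha$ separately. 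For $j=n$ the normal simplifications give $(S\bm{u})^n_n=2\,\partial_n u^n$, so $\tau^n=-(\lambda+2\mu)\partial_n u^n-\lambda(\partial_\beta u^\beta+\Gamma^\alpha_{\alpha\beta}u^\beta+\Gamma^\alpha_{\alpha n}u^n)$; adding the $+\beta\theta$ coupling reproduces exactly the $n$-th row of $A(-\partial/\partial x_n)-D$.

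The one genuinely delicate computation is the tangential traction $j=\alpha$. Here I would expand $(S\bm{u})^\alpha_n=\nabla_n u^\alpha+\nabla^\alpha u_n=\partial_n u^\alpha+\Gamma^\alpha_{n\beta}u^\beta+g^{\alpha\beta}\bigl(\partial_\beta u^n-\Gamma^\gamma_{\beta n}u_\gamma\bigr)$ and observe, using $\Gamma^\alpha_{n\beta}=\tfrac12 g^{\alpha\gamma}\partial_n g_{\beta\gamma}$, the relation $u_\gamma=g_{\gamma\delta}u^\delta$, and the symmetry of $g_{\alpha\beta}$, that the two zeroth-order contributions $\Gamma^\alpha_{n\beta}u^\beta$ and $-g^{\alpha\beta}\Gamma^\gamma_{\beta n}u_\gamma$ cancel identically. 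This leaves $(S\bm{u})^\alpha_n=\partial_n u^\alpha+g^{\alpha\beta}\partial_\beta u^n$, hence $\tau^\alpha=-\mu\,\partial_n u^\alpha-\mu g^{\alpha\beta}\partial_\beta u^n$, which is precisely the $\alpha$-th row of $A(-\partial/\partial x_n)-D$. This cancellation---forced by the second-fundamental-form terms in $\Gamma^\alpha_{n\beta}$ matching those hidden inside $\nabla^\alpha u_n$---is the main obstacle, since it is exactly what guarantees that the top-left and top-right blocks of $D$ carry no zeroth-order piece. Collecting the normal-derivative terms into $A(-\partial/\partial x_n)$ and the remaining tangential and zeroth-order terms into $-D$ then yields the claimed identities \eqref{2.8}, \eqref{2.9} and \eqref{2.9.1}.
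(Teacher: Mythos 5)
Your proof is correct: the coordinate simplifications in boundary normal coordinates ($g_{nn}=1$, $g_{\alpha n}=0$, hence $\Gamma^n_{nk}=\Gamma^l_{nn}=0$ and $\nu=-\partial/\partial x_n$), the identity $(S\bm{u})^n_n=2\,\partial_n u^n$, the cancellation $\Gamma^\alpha_{n\beta}u^\beta-g^{\alpha\beta}\Gamma^\gamma_{\beta n}u_\gamma=0$ giving $(S\bm{u})^\alpha_n=\partial_n u^\alpha+g^{\alpha\beta}\partial_\beta u^n$, and the row-by-row matching against $A(-\partial/\partial x_n)-D$ all check out. The paper gives no argument of its own here---it simply cites the analogous computation in \cite{LiuTan22.2}---so your write-up is exactly the direct verification that the paper outsources, carried out along the same lines rather than by a different route.
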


\addvspace{5mm}

\begin{proof}
    This proof is similar to the proof of \cite[Proposition 2.1]{LiuTan22.2}.
\end{proof}

\addvspace{3mm}

In boundary normal coordinates, we write the Laplace--Beltrami operator as
\begin{align}
    \Delta_g
    & = \frac{\partial^2 }{\partial x_n^2} + \Gamma^\alpha_{\alpha n} \frac{\partial }{\partial x_n} + g^{\alpha\beta} \frac{\partial^2}{\partial x_\alpha\partial x_\beta} +
    \Bigl(
        g^{\alpha\beta} \Gamma^\gamma_{\gamma\alpha} + \frac{\partial g^{\alpha\beta}}{\partial x_\alpha}
    \Bigr)
    \frac{\partial }{\partial x_\beta}.
\end{align}
Combining this and \eqref{1.2}, \eqref{1.3}, \eqref{0.01}--\eqref{1.6}, we deduce that (cf. \cite{TanLiu22,LiuTan22.2})
\begin{align}\label{3.07}
    A^{-1} T_g  = I_{n+1}\frac{\partial^2 }{\partial x_n^2} + B \frac{\partial }{\partial x_n} + C,
\end{align}
where $A$ is given by \eqref{2.9}, $B=B_1+B_0$, $C =C_2+C_1+C_0$, and
\begin{align*}
    B_1&=(\lambda+\mu)
    \begin{bmatrix}
        0 & \frac{1}{\mu}\bigl[g^{\alpha\beta}\frac{\partial}{\partial x_{\beta}}\bigr] & 0\\
        \frac{1}{\lambda+2\mu}\bigl[\frac{\partial}{\partial x_{\beta}}\bigr] & 0 & 0\\
        0 & 0 & 0
    \end{bmatrix},\\[2mm]
    B_0&=
    \begin{bmatrix}
        \Gamma^\alpha_{\alpha n}I_{n-1}+2[\Gamma^\alpha_{n\beta}] & 0 & 0 \\[2mm]
        \frac{\lambda+\mu}{\lambda+2\mu}[\Gamma^\alpha_{\alpha\beta}] & \Gamma^\alpha_{n\alpha} & - \frac{\beta}{\lambda+2\mu}\\[2mm]
        0 & \frac{i\omega\beta\theta_0}{\alpha} & \Gamma^\alpha_{n\alpha}
    \end{bmatrix}
    +\begin{bmatrix}
        \frac{1}{\mu} \frac{\partial \mu}{\partial x_{n}} I_{n-1} & \frac{1}{\mu} [\nabla^\alpha \lambda] & 0 \\[2mm]
        \frac{1}{\lambda+2\mu} \big[\frac{\partial \mu}{\partial x_{\beta}}\big] & \frac{1}{\lambda+2\mu} \frac{\partial (\lambda+2\mu)}{\partial x_{n}} & 0\\[2mm]
        0 & 0 & 0
    \end{bmatrix},\\[2mm]
    C_2&=
    \begin{bmatrix}
        (g^{\alpha\beta}\frac{\partial^2 }{\partial x_\alpha \partial x_\beta})I_{n-1} + \frac{\lambda+\mu}{\mu}\bigl[g^{\alpha\gamma}\frac{\partial^2 }{\partial x_\gamma \partial x_\beta}\bigr] & 0 & 0\\
        0 & \frac{\mu}{\lambda+2\mu}g^{\alpha\beta}\frac{\partial^2 }{\partial x_\alpha \partial x_\beta} & 0\\
        0 & 0 & g^{\alpha\beta}\frac{\partial^2 }{\partial x_\alpha \partial x_\beta} 
    \end{bmatrix},\\[2mm]
    C_1&=
    \begin{bmatrix}
        \bigl((g^{\alpha\beta}\Gamma^\gamma_{\alpha\gamma}+\frac{\partial g^{\alpha\beta}}{\partial x_{\alpha}})\frac{\partial }{\partial x_{\beta}}\bigr) I_{n-1} & 0 & 0 \\
        0 & \frac{\mu}{\lambda+2\mu}\bigl(g^{\alpha\beta}\Gamma^\gamma_{\alpha\gamma}+\frac{\partial g^{\alpha\beta}}{\partial x_{\alpha}}\bigr)\frac{\partial }{\partial x_{\beta}} & 0 \\
        0 & 0 & \bigl(g^{\alpha\beta}\Gamma^\gamma_{\alpha\gamma}+\frac{\partial g^{\alpha\beta}}{\partial x_{\alpha}}\bigr)\frac{\partial }{\partial x_{\beta}}
    \end{bmatrix}\\[2mm]
    &\quad +\frac{\lambda+\mu}{\mu}
    \begin{bmatrix}
        \bigl[g^{\alpha\gamma}\Gamma^\rho_{\rho\beta}\frac{\partial }{\partial x_{\gamma}}\bigr] & \bigl[g^{\alpha\gamma}\Gamma^\rho_{\rho n}\frac{\partial }{\partial x_{\gamma}}\bigr] & 0\\
        0 & 0 & 0\\
        0 & 0 & 0
    \end{bmatrix}\\[2mm]
    &\quad +
    \begin{bmatrix}
        2\bigl[g^{\gamma\rho}\Gamma^\alpha_{\rho\beta}\frac{\partial }{\partial x_{\gamma}}\bigr] & 2\bigl[g^{\gamma\rho}\Gamma^\alpha_{\rho n}\frac{\partial }{\partial x_{\gamma}}\bigr] & -\frac{\beta}{\mu}\bigl[g^{\alpha\beta}\frac{\partial }{\partial x_{\beta}}\bigr] \\[2mm]
        \frac{2\mu}{\lambda+2\mu}\bigl[g^{\gamma\rho}\Gamma^n_{\rho\beta}\frac{\partial }{\partial x_{\gamma}}\bigr] & 0 & 0\\[2mm]
        \frac{i\omega\beta\theta_0}{\alpha}\bigl[\frac{\partial }{\partial x_{\beta}}\bigr] & 0 & 0
    \end{bmatrix}\\[2mm]
    &\quad +
    \begin{bmatrix}
        \frac{1}{\mu}(\nabla^\alpha \mu \frac{\partial }{\partial x_{\alpha}}) I_{n-1} + \frac{1}{\mu} \big[\nabla^\alpha \lambda \frac{\partial }{\partial x_{\beta}} + g^{\alpha\gamma} \frac{\partial \mu}{\partial x_{\beta}} \frac{\partial }{\partial x_{\gamma}}\big] & \frac{1}{\mu} \frac{\partial \mu}{\partial x_{n}}\big[g^{\alpha\beta} \frac{\partial }{\partial x_{\beta}}\big] & 0 \\[2mm]
        \frac{1}{\lambda+2\mu} \frac{\partial \lambda}{\partial x_{n}} \big[\frac{\partial }{\partial x_{\beta}}\big] & \frac{1}{\lambda+2\mu} \nabla^{\alpha}\mu \frac{\partial }{\partial x_{\alpha}}& 0 \\[2mm]
        0& 0 & 0 
    \end{bmatrix},\\[2mm]
    C_0 &=(\lambda+\mu)
    \begin{bmatrix}
        \frac{1}{\mu}\bigl[g^{\alpha\gamma}\frac{\partial \Gamma^\rho_{\rho\beta}}{\partial x_\gamma}\bigr] & \frac{1}{\mu}\bigl[g^{\alpha\gamma}\frac{\partial \Gamma^\rho_{\rho n}}{\partial x_\gamma}\bigr] & 0\\[2mm]
        \frac{1}{\lambda+2\mu}\bigl[\frac{\partial \Gamma^\alpha_{\alpha\beta}}{\partial x_n}\bigr] & \frac{1}{\lambda+2\mu}\frac{\partial \Gamma^\alpha_{\alpha n}}{\partial x_n} & 0\\[2mm]
        0 & 0 & 0
    \end{bmatrix}
    +
    \begin{bmatrix}
        \bigl[g^{ml}\frac{\partial \Gamma^\alpha_{ml}}{\partial x_\beta}\bigr] & \bigl[g^{ml}\frac{\partial \Gamma^\alpha_{ml}}{\partial x_n}\bigr] & 0\\[2mm]
        \frac{\mu}{\lambda+2\mu}\bigl[g^{ml}\frac{\partial \Gamma^n_{ml}}{\partial x_\beta}\bigr] & \frac{\mu}{\lambda+2\mu}g^{ml}\frac{\partial \Gamma^n_{ml}}{\partial x_n} & 0\\[2mm]
        0 & 0 & 0
    \end{bmatrix}\\[2mm]
    &\quad +
    \begin{bmatrix}
        \frac{\rho\omega^2}{\mu}I_{n-1} & 0 & 0\\[2mm]
        0 & \frac{\rho\omega^2}{\lambda+2\mu} & 0\\[2mm]
        \frac{i\omega\beta\theta_0}{\alpha}[\Gamma^\alpha_{\alpha\beta}] & \frac{i\omega\beta\theta_0}{\alpha}\Gamma^\alpha_{\alpha n} & \frac{i\omega\gamma}{\alpha}
    \end{bmatrix}\\[2mm]
    &\quad +
    \begin{bmatrix}
        \frac{1}{\mu} \big[(\nabla^\alpha \lambda) \Gamma^\gamma_{\beta\gamma} - \frac{\partial \mu}{\partial x_{\gamma}} \frac{\partial g^{\alpha\gamma}}{\partial x_{\beta}}\big] & \frac{1}{\mu} \big[(\nabla^\alpha \lambda) \Gamma^\beta_{\beta n} - \frac{\partial \mu}{\partial x_{\beta}} \frac{\partial g^{\alpha\beta}}{\partial x_{n}}\big]  & 0\\[2mm]
        \frac{1}{\lambda+2\mu} \frac{\partial \lambda}{\partial x_{n}} [\Gamma^\alpha_{\alpha\beta}] & \frac{1}{\lambda+2\mu} \frac{\partial \lambda}{\partial x_{n}} \Gamma^\alpha_{\alpha n} & 0\\[2mm]
        0 & 0 & 0
    \end{bmatrix}.
\end{align*}

\addvspace{5mm}

We then derive the microlocal factorization of the thermoelastic operator $T_g$.
\begin{proposition}\label{prop3.1}
    There exists a pseudodifferential operator $Q(x,\partial_{x^\prime})$ of order one in $x^\prime$ depending smoothly on $x_n$ such that
    \begin{align*}
        A^{-1}T_g 
        = \Bigl(I_{n+1}\frac{\partial }{\partial x_n} + B - Q\Bigr)\Bigl(I_{n+1}\frac{\partial }{\partial x_n} + Q\Bigr)
    \end{align*}
    modulo a smoothing operator. Moreover, let $q(x,\xi^{\prime}) \sim \sum_{j\leqslant 1} q_j(x,\xi^{\prime})$ be the full symbol of $Q(x,\partial_{x^\prime})$. Then
    \begin{align}
        q_1&=|\xi^{\prime}|I_{n+1} + \frac{\lambda+\mu}{\lambda+3\mu}F_1, \label{3.9}\\
        q_{-m-1}&=\frac{1}{2|\xi^{\prime}|}E_{-m} - \frac{\lambda+\mu}{4(\lambda+3\mu)|\xi^{\prime}|^2}(F_2E_{-m}+E_{-m}F_1) \notag\\ 
        &\quad - \frac{(\lambda+\mu)^2}{4(\lambda+3\mu)^2|\xi^{\prime}|^3}F_2E_{-m}F_1, \quad m\geqslant -1, \label{3.1.1}
    \end{align}
    where
    \begin{align}
        \label{3.1}F_1&=
        \begin{bmatrix}
            \frac{1}{|\xi^{\prime}|}[\xi^\alpha\xi_\beta] & i[\xi^\alpha] & 0\\[2mm]
            i[\xi_\beta] & -|\xi^{\prime}| & 0\\[2mm]
            0 & 0 & 0
        \end{bmatrix},\\
        \label{3.2}F_2&=
        \begin{bmatrix}
            \frac{1}{|\xi^{\prime}|}[\xi^\alpha\xi_\beta] & -\frac{i(\lambda+2\mu)}{\mu}[\xi^\alpha] & 0 \\[2mm]
            -\frac{i\mu}{\lambda+2\mu}[\xi_\beta] & -|\xi^{\prime}| & 0\\[2mm]
            0 & 0 & 0
        \end{bmatrix},
    \end{align}
    $\xi^\alpha=g^{\alpha\beta}\xi_\beta$, $|\xi^{\prime}|=\sqrt{\xi^\alpha\xi_\alpha}$, $E_1,E_0$ and $E_{-m}\,(m\geqslant 1)$ are given by \eqref{2.11}, \eqref{3.05} and \eqref{4.1}, respectively.
\end{proposition}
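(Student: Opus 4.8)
The plan is to establish the factorization at the operator level first and then solve the resulting symbol equation order by order. Expanding the product on the right-hand side and using $\frac{\partial}{\partial x_n}(Qf)=\bigl[\frac{\partial}{\partial x_n},Q\bigr]f+Q\frac{\partial f}{\partial x_n}$, the cross terms in $\partial/\partial x_n$ cancel, and matching with \eqref{3.07} reduces the asserted identity to the operator Riccati equation
\[
Q^2-BQ-\Bigl[\frac{\partial}{\partial x_n},Q\Bigr]+C=0
\]
modulo a smoothing operator. Passing to full symbols through the composition formula for pseudodifferential operators, with $q\sim\sum_{j\leqslant 1}q_j$, $b=b_1+b_0$ and $c=c_2+c_1+c_0$ the symbols of $Q$, $B$ and $C$, this becomes the matrix symbol equation
\[
\sum_{J}\frac{(-i)^{|J|}}{J!}\partial_{\xi^{\prime}}^{J}q\,\partial_{x^\prime}^{J}q-\sum_{J}\frac{(-i)^{|J|}}{J!}\partial_{\xi^{\prime}}^{J}b\,\partial_{x^\prime}^{J}q-\frac{\partial q}{\partial x_n}+c=0,
\]
which I will solve by collecting terms of each homogeneity degree.

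First I would treat the leading degree. Only $q_1^2$, $b_1q_1$ and $c_2$ are homogeneous of degree two, so $q_1$ must satisfy the algebraic Riccati equation $q_1^2-b_1q_1+c_2=0$. Reading off $b_1$ and $c_2$ from the expressions after \eqref{3.07}, the temperature block decouples and forces the entry $|\xi^{\prime}|$ in the $(n+1,n+1)$ slot, while the elastic $n\times n$ block reproduces the elastic principal symbol computed in \cite{TanLiu22}. I would then verify directly that $q_1=|\xi^{\prime}|I_{n+1}+\frac{\lambda+\mu}{\lambda+3\mu}F_1$ is the unique solution whose leading part is positive, the verification resting on the key algebraic identity $F_1^2=0$ (and, as a useful by-product, $F_2^2=0$), which I would record at the outset.

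Next comes the recursion. Collecting the degree-$(-m)$ part for $m\geqslant -1$, the only contribution containing the highest unknown $q_{-m-1}$ is $q_1q_{-m-1}+q_{-m-1}q_1-b_1q_{-m-1}$; every remaining term is built from $q_1,\dots,q_{-m}$, from $b_0,b_1,c$, and from their $x^\prime$-, $\xi^{\prime}$- and $x_n$-derivatives, all known by induction, and I would package them into the matrix $E_{-m}$. Since a short computation gives $q_1-b_1=|\xi^{\prime}|I_{n+1}+\frac{\lambda+\mu}{\lambda+3\mu}F_2$, the equation for $q_{-m-1}$ is the Sylvester-type relation $\mathcal{L}(q_{-m-1})=E_{-m}$ with $\mathcal{L}(X)=2|\xi^{\prime}|X+\frac{\lambda+\mu}{\lambda+3\mu}(F_2X+XF_1)$. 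The point is that the nilpotency $F_1^2=F_2^2=0$ makes the map $X\mapsto F_2X+XF_1$ nilpotent of order three (indeed its square equals $2F_2XF_1$ and its cube vanishes), so the Neumann series for $\mathcal{L}^{-1}$ terminates after three terms and yields, with $E_{-m}$ as in \eqref{2.11}, \eqref{3.05}, \eqref{4.1}, precisely the closed form \eqref{3.1.1}. Running this for $m=-1,0,1,\dots$ produces $q_0,q_{-1},q_{-2},\dots$, and a Borel summation of the series $\sum_{j\leqslant 1}q_j$ together with the standard elliptic parametrix construction furnishes an honest pseudodifferential operator $Q$ realizing the factorization modulo a smoothing operator.

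The main obstacle I anticipate is the bookkeeping in the recursion step: assembling $E_{-m}$ requires carefully tracking the matrix-valued terms produced by the composition formula, where noncommuting matrix products intertwine with $\xi^{\prime}$- and $x^\prime$-derivatives, and the coupling between the elastic and thermal blocks carried by $B_0$, $C_1$ and $C_0$ first enters at subleading order. The algebraic inversion of $\mathcal{L}$, by contrast, is clean once the nilpotency of $F_1$ and $F_2$ is in hand, so the difficulty is computational rather than structural.
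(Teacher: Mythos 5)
Your route coincides with the paper's own proof: expand the factorization to reduce the claim to the operator Riccati equation, pass to the full symbol equation \eqref{3.8}, and split by homogeneity, which produces the quadratic equation $q_1^2-b_1q_1+c_2=0$ for the principal symbol and the Sylvester equations $(q_1-b_1)q_{-m-1}+q_{-m-1}q_1=E_{-m}$ of \eqref{2.10}, \eqref{3.04}, \eqref{4.2} for the lower-order symbols. What you add is the explicit inversion mechanism that the paper delegates to citations of \cite{Liu19,TanLiu22,LiuTan22.2}: the identities $F_1^2=F_2^2=0$ and $q_1-b_1=|\xi^{\prime}|I_{n+1}+\frac{\lambda+\mu}{\lambda+3\mu}F_2$ are both correct, the map $\mathcal{N}(X)=F_2X+XF_1$ is indeed nilpotent with $\mathcal{N}^2(X)=2F_2XF_1$ and $\mathcal{N}^3=0$, and inverting $\mathcal{L}=2|\xi^{\prime}|I+\frac{\lambda+\mu}{\lambda+3\mu}\mathcal{N}$ by a terminating Neumann series is a sound, self-contained way to solve these equations.

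The gap is your final claim that this "yields precisely the closed form \eqref{3.1.1}". It does not. With $\kappa=\frac{\lambda+\mu}{\lambda+3\mu}$, the terminating series is
\begin{align*}
\mathcal{L}^{-1}=\frac{1}{2|\xi^{\prime}|}\Bigl(I-\frac{\kappa}{2|\xi^{\prime}|}\mathcal{N}+\frac{\kappa^{2}}{4|\xi^{\prime}|^{2}}\mathcal{N}^{2}\Bigr),
\end{align*}
so, using $\mathcal{N}^{2}(E_{-m})=2F_2E_{-m}F_1$, your method gives
\begin{align*}
q_{-m-1}=\frac{1}{2|\xi^{\prime}|}E_{-m}-\frac{\kappa}{4|\xi^{\prime}|^{2}}\bigl(F_2E_{-m}+E_{-m}F_1\bigr)+\frac{\kappa^{2}}{4|\xi^{\prime}|^{3}}F_2E_{-m}F_1,
\end{align*}
with a plus sign on the last term, whereas \eqref{3.1.1} carries a minus sign there. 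The sign is not cosmetic: substituting the minus-sign expression into $(q_1-b_1)X+Xq_1=E_{-m}$ and using $F_1^2=F_2^2=0$ leaves the residual $-\frac{\kappa^{2}}{|\xi^{\prime}|^{2}}F_2E_{-m}F_1$, which is nonzero for generic $E_{-m}$ (already $F_2F_1\neq 0$), while the plus-sign expression solves the equation exactly. In other words, your method, carried out correctly, contradicts the formula you claim it reproduces; the printed \eqref{3.1.1} evidently carries a sign typo, and the plus-sign formula is the correct solution of the Sylvester equation. You would have discovered this had you performed the verification your outline promises; as written, the proposal asserts an agreement it never checks, and that assertion is false as to the sign.
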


\addvspace{5mm}

\begin{proof}
    It follows from \eqref{3.07} that
    \begin{align*}
        I_{n+1}\frac{\partial^2 }{\partial x_n^2} + B \frac{\partial }{\partial x_n} + C 
        = \Bigl(I_{n+1}\frac{\partial }{\partial x_n} + B - Q\Bigr)\Bigl(I_{n+1}\frac{\partial }{\partial x_n} + Q\Bigr).
    \end{align*}
    Equivalently,
    \begin{align}\label{3.7}
        Q^2 - BQ - \Bigl[I_{n+1}\frac{\partial }{\partial x_n},Q\Bigr] + C = 0,
    \end{align}
    where the commutator $\big[I_{n+1}\frac{\partial }{\partial x_n},Q\big]$ is defined by, for any smooth function $f\in C^{\infty}(M)$,
    \begin{align*}
        \Bigl[I_{n+1}\frac{\partial }{\partial x_n},Q\Bigr]f
        &:= I_{n+1}\frac{\partial }{\partial x_n}(Qf) - Q \Bigl(I_{n+1}\frac{\partial }{\partial x_n}\Bigr)f \\
        &= \frac{\partial Q}{\partial x_n}f.
    \end{align*}

    Let $q = q(x,\xi^{\prime})$ be the full symbol of the operator $Q(x,\partial_{x^\prime})$, we write
    \begin{align*}
        q(x,\xi^{\prime}) \sim \sum_{j\leqslant 1} q_j(x,\xi^{\prime})
    \end{align*}
    with $q_j(x,\xi^{\prime})$ homogeneous of degree $j$ in $\xi^{\prime}$. Let
    \begin{align*}
        b(x,\xi^{\prime})=b_1(x,\xi^{\prime}) + b_0(x,\xi^{\prime})
    \end{align*}
    and
    \begin{align*}
        c(x,\xi^{\prime}) = c_2(x,\xi^{\prime}) + c_1(x,\xi^{\prime}) + c_0(x,\xi^{\prime})
    \end{align*}
    be the full symbols of $B$ and $C$, respectively. We denote by $\xi^\alpha=g^{\alpha\beta}\xi_\beta$ and $|\xi^{\prime}|=\sqrt{\xi^\alpha\xi_\alpha}$. Thus, we have $b_0=B_0$, $c_0 =C_0$ and
    \begin{align*}
        &b_1=i(\lambda+\mu)
        \begin{bmatrix}
            0 & \frac{1}{\mu}[\xi^\alpha] & 0 \\
            \frac{1}{\lambda+2\mu}[\xi_\beta] & 0 & 0\\
            0 & 0 & 0
        \end{bmatrix},\\[2mm]
        &c_2=-
        \begin{bmatrix}
            |\xi^{\prime}|^2 I_{n-1} + \frac{\lambda+\mu}{\mu}[\xi^\alpha\xi_\beta] & 0 & 0\\
            0 & \frac{\mu}{\lambda+2\mu}|\xi^{\prime}|^2 & 0\\
            0 & 0 & |\xi^{\prime}|^2
        \end{bmatrix},\\[2mm]
        &c_1=i
        \begin{bmatrix}
            \bigl(\xi^\alpha\Gamma^\beta_{\alpha\beta}+\frac{\partial \xi^\alpha}{\partial x_{\alpha}}\bigr) I_{n-1} & 0 & 0 \\
            0 & \frac{\mu}{\lambda+2\mu}\bigl(\xi^\alpha\Gamma^\beta_{\alpha\beta}+\frac{\partial \xi^\alpha}{\partial x_{\alpha}}\bigr) & 0 \\
            0 & 0 & \xi^\alpha\Gamma^\beta_{\alpha\beta}+\frac{\partial \xi^\alpha}{\partial x_{\alpha}}
        \end{bmatrix}\\[2mm]
        &\qquad +\frac{i(\lambda+\mu)}{\mu}
        \begin{bmatrix}
            [\xi^\alpha\Gamma^\gamma_{\gamma\beta}] & \Gamma^\beta_{\beta n}[\xi^\alpha] & 0\\
            0 & 0 & 0\\
            0 & 0 & 0
        \end{bmatrix}
        +
        \begin{bmatrix}
            2i[\xi^\gamma\Gamma^\alpha_{\gamma\beta}] & 2i[\xi^\gamma\Gamma^\alpha_{\gamma n}] & -\frac{i\beta}{\mu}[\xi^{\alpha}] \\[2mm]
            \frac{2i\mu}{\lambda+2\mu}[\xi^\gamma\Gamma^n_{\gamma\beta}] & 0 & 0\\[2mm]
            -\frac{\omega\beta\theta_0}{\alpha}[\xi_{\beta}] & 0 & 0
        \end{bmatrix}\\[2mm]
        &\qquad + i
        \begin{bmatrix}
            \frac{1}{\mu} (\xi_{\alpha} \nabla^\alpha \mu) I_{n-1} + \frac{1}{\mu} \big[\xi_{\beta}\nabla^\alpha \lambda  + \xi^\alpha \frac{\partial \mu}{\partial x_{\beta}}\big]& \frac{1}{\mu} \frac{\partial \mu}{\partial x_{n}} [\xi^\alpha]  & 0\\[2mm]
            \frac{1}{\lambda+2\mu} \frac{\partial \lambda}{\partial x_{n}} [\xi_{\beta}] & \frac{1}{\lambda+2\mu} \xi_{\alpha} \nabla^{\alpha}\mu & 0\\[2mm]
            0 & 0 & 0
        \end{bmatrix}.
    \end{align*} 
    
    \addvspace{3mm}
    
    Hence, we get the following full symbol equation of \eqref{3.7}
    \begin{equation}\label{3.8}
        \sum_{J} \frac{(-i)^{|J|}}{J !} \partial_{\xi^{\prime}}^{J}q \, \partial_{x^\prime}^{J}q - \sum_{J} \frac{(-i)^{|J|}}{J !} \partial_{\xi^{\prime}}^{J}b \, \partial_{x^\prime}^{J}q - \frac{\partial q}{\partial x_n} + c = 0,
    \end{equation}
    where the sum is over all multi-indices $J$. 

    We shall determine $q_j$ recursively so that \eqref{3.8} holds modulo $S^{-\infty}$. Grouping the homogeneous terms of degree two in \eqref{3.8}, one has
    \begin{align*}
        q_1^2-b_1q_1+c_2=0.
    \end{align*}
    By solving the above matrix equation we get the explicit expression \eqref{3.9} for the principal symbol $q_1$ of $Q$. Here we choose that $q_1$ is positive-definite (cf. \cite{Liu19,LiuTan22.2,TanLiu22}).
    
    Grouping the homogeneous terms of degree one in \eqref{3.8}, we get the following Sylvester equation:
    \begin{align}\label{2.10}
        (q_1-b_1)q_0+q_0q_1=E_1,
    \end{align}
    where
    \begin{align}\label{2.11}
        E_1=i\sum_\alpha\frac{\partial (q_1-b_1)}{\partial \xi_\alpha}\frac{\partial q_1}{\partial x_\alpha}+b_0q_1+\frac{\partial q_1}{\partial x_n} - c_1.
    \end{align}

    Grouping the homogeneous terms of degree zero in \eqref{3.8}, we get
    \begin{align}\label{3.04}
        (q_1-b_1)q_{-1}+q_{-1}q_1=E_0,
    \end{align}
    where
    \begin{align}\label{3.05}
        E_0&=i\sum_\alpha\Bigl(\frac{\partial (q_1-b_1)}{\partial \xi_\alpha}\frac{\partial q_0}{\partial x_\alpha}+\frac{\partial q_0}{\partial \xi_\alpha}\frac{\partial q_1}{\partial x_\alpha}\Bigr)+\frac{1}{2}\sum_{\alpha,\beta}\frac{\partial^2q_1}{\partial \xi_\alpha \partial\xi_\beta}\frac{\partial^2q_1}{\partial x_\alpha \partial x_\beta} \notag\\
        &\quad -q_0^2 +b_0q_0 +\frac{\partial q_0}{\partial x_n} - c_0.
    \end{align}
    
    Proceeding recursively, grouping the homogeneous terms of degree $-m\ (m\geqslant 1)$ in \eqref{3.8}, we get
    \begin{align}\label{4.2}
        (q_1-b_1)q_{-m-1}+q_{-m-1}q_1=E_{-m},
    \end{align}
    where
    \begin{align}\label{4.1}
        E_{-m}= b_0q_{-m}+\frac{\partial q_{-m}}{\partial x_n} - i\sum_\alpha\frac{\partial b_1}{\partial \xi_\alpha}\frac{\partial q_{-m}}{\partial x_\alpha} - \sum_{\substack{-m \leqslant j,k \leqslant 1 \\ |J| = j + k + m}} \frac{(-i)^{|J|}}{J !} \partial_{\xi^{\prime}}^{J} q_j\, \partial_{x^\prime}^{J} q_k
    \end{align}
    for $m \geqslant 1$. Using the methods established in \cite{Liu19,LiuTan22.2,TanLiu22} we solve equations \eqref{2.10}, \eqref{3.04} and \eqref{4.2} to obtain $q_{-m-1}$ for $m \geqslant -1$, see \eqref{3.1.1}.
\end{proof}

\addvspace{5mm}

From the above Proposition \ref{prop3.1} we get the full symbol of the pseudodifferential operator $Q$. This implies that we obtain $Q$ on the boundary modulo a smoothing operator.
\begin{proposition}
    In the boundary normal coordinates, the thermoelastic Dirichlet-to-Neumann map $\Lambda_g$ can be represented as
    \begin{align}\label{3.10}
        \Lambda_g  = AQ-D
    \end{align}
    modulo a smoothing operator, where $A$ and $D$ are given by \eqref{2.9} and \eqref{2.9.1}, respectively.
\end{proposition}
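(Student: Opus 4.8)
The plan is to combine the exact boundary representation $\Lambda_g = A\bigl(-\frac{\partial}{\partial x_n}\bigr)-D$ of Proposition \ref{prop2.1} with the microlocal factorization of Proposition \ref{prop3.1}. The whole statement \eqref{3.10} will follow once I show that, for the solution $\bm{U}$ of the Dirichlet problem \eqref{1.1}, one has $-\frac{\partial \bm{U}}{\partial x_n} = Q\bm{U}$ on $\partial M$ modulo a smoothing operator; inserting this into \eqref{2.8} then converts $-\frac{\partial}{\partial x_n}$ into $Q$ and gives $\Lambda_g = AQ - D$ directly.

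First I would use that $A$ in \eqref{2.9} is invertible, since $\mu>0$, $\lambda+2\mu=(\lambda+\mu)+\mu>0$ and $\alpha>0$; hence $T_g\bm{U}=0$ is equivalent to $A^{-1}T_g\bm{U}=0$. By Proposition \ref{prop3.1},
\begin{align*}
    A^{-1}T_g\bm{U} = \Bigl(I_{n+1}\frac{\partial }{\partial x_n} + B - Q\Bigr)\Bigl(I_{n+1}\frac{\partial }{\partial x_n} + Q\Bigr)\bm{U} = 0
\end{align*}
modulo a smoothing operator. Setting $\bm{w} := \bigl(I_{n+1}\frac{\partial }{\partial x_n} + Q\bigr)\bm{U}$, the field $\bm{w}$ then satisfies
\begin{align*}
    \frac{\partial \bm{w}}{\partial x_n} = (Q-B)\bm{w}
\end{align*}
modulo a smoothing term, a first-order evolution equation in the normal variable $x_n$.

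Next I would invoke the decoupling of the two factors. Because the principal symbol $q_1$ in \eqref{3.9} is chosen positive-definite, the factor $I_{n+1}\frac{\partial }{\partial x_n} + Q$ propagates the modes decaying into the interior $\{x_n>0\}$ (roughly $e^{-|\xi^{\prime}|x_n}$), whereas the equation above for $\bm{w}$ has principal part $\frac{\partial}{\partial x_n}\approx q_1$ and so propagates exponentially growing modes. Since the solution $\bm{U}$ producing the Dirichlet-to-Neumann data is the one regular in the interior of the collar, its outgoing/growing component must be trivial; that is, $\bm{w}$ is smoothing, or equivalently $\bigl(I_{n+1}\frac{\partial }{\partial x_n} + Q\bigr)\bm{U}\equiv 0$ modulo a smoothing operator. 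Restricting to $\partial M=\{x_n=0\}$ gives $-\frac{\partial \bm{U}}{\partial x_n} = Q\bm{U}$ modulo smoothing, and substituting into \eqref{2.8} yields
\begin{align*}
    \Lambda_g(\bm{U}|_{\partial M}) = A\Bigl(-\frac{\partial \bm{U}}{\partial x_n}\Bigr) - D\bm{U} = (AQ-D)\bm{U}
\end{align*}
on $\partial M$. As the boundary value $\bm{V}=\bm{U}|_{\partial M}\in (H^{1/2}(\partial M))^{n+1}$ is arbitrary, we conclude $\Lambda_g = AQ - D$ modulo a smoothing operator, which is \eqref{3.10}. This is the standard microlocal argument used in \cite{LeeUhlm89,Liu19,TanLiu22,LiuTan22.2}, now carried out for the $(n+1)\times(n+1)$ thermoelastic system.

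The main obstacle is the decoupling step: one must justify rigorously that the admissible solution is annihilated by the factor $I_{n+1}\frac{\partial }{\partial x_n} + Q$ up to a smoothing remainder, and that $\bm{w}$ is genuinely smoothing rather than merely of lower order. I would handle this by constructing incoming and outgoing parametrices for the first-order system $\frac{\partial \bm{w}}{\partial x_n} = (Q-B)\bm{w}$ in a collar of $\partial M$, using the positive-definiteness of $q_1$ to obtain the decay estimates separating the two parametrices, exactly as in the elastic and constant-coefficient thermoelastic treatments of \cite{Liu19,TanLiu22,LiuTan22.2}. A final routine check is that the smoothing remainders coming from Proposition \ref{prop3.1} do not alter the full symbol, so that $\sigma(\Lambda_g)$ can be read off from $AQ-D$ and fed into Theorem \ref{thm1.3}.
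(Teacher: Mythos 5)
Your proposal is correct and follows essentially the same route as the paper: the paper's proof of this proposition simply defers to \cite[Proposition 3.2]{LiuTan22.2}, and the argument there is precisely the Lee--Uhlmann-style decoupling you describe --- showing that the admissible solution of \eqref{1.1} is annihilated, modulo a smoothing operator, by the factor $I_{n+1}\frac{\partial}{\partial x_n}+Q$ (because the complementary factor propagates exponentially growing modes, as $q_1$ is positive-definite), and then substituting $-\frac{\partial \bm{U}}{\partial x_n}=Q\bm{U}$ into \eqref{2.8}. Your outline, including the flagged need to justify the decoupling via parametrices in the collar as in \cite{LeeUhlm89,Liu19,TanLiu22,LiuTan22.2}, matches that argument.
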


\addvspace{5mm}

\begin{proof}
    This proof is similar to the proof of \cite[Proposition 3.2]{LiuTan22.2}.
\end{proof}

\addvspace{10mm}

\section{Determining coefficients on the boundary}\label{s3}

\addvspace{5mm}

In this section we will prove the uniqueness results for the coefficients $\lambda,\mu,\alpha$ and $\beta$ on the boundary by the full symbol of the thermoelastic Dirichlet-to-Neumann map $\Lambda_g$. We first prove Theorem \ref{thm1.3}.

\addvspace{5mm}

\begin{proof}[Proof of Theorem {\rm \ref{thm1.3}}]
    Let $\sigma(\Lambda_g ) \sim \sum_{j\leqslant 1} p_j(x,\xi^{\prime})$ be the full symbol of the thermoelastic Dirichlet-to-Neumann map $\Lambda_g$. According to \eqref{3.10} and \eqref{2.9.1} we have
\begin{align}
    p_1&=Aq_1-d_1,\label{3.01}\\
    p_0&=Aq_0-d_0,\label{3.02}\\
    p_{-m}&=Aq_{-m},\quad m\geqslant 1,\label{3.03}
\end{align}
where $A$ is given by \eqref{2.9} and
\begin{align}\label{3.08}
    d_1=
    \begin{bmatrix}
        0 & i\mu[\xi^\alpha] & 0\\
        i\lambda [\xi_\beta] & 0 & 0\\
        0 & 0 & 0
    \end{bmatrix},\quad
    d_0=
    \begin{bmatrix}
        0 & 0 & 0\\
        \lambda [\Gamma^\alpha_{\alpha\beta}] & \lambda \Gamma^\alpha_{\alpha n} & -\beta\\
        0 & 0 & 0
    \end{bmatrix}.
\end{align}
Therefore, it is easy to obtain \eqref{18}--\eqref{20}.
\end{proof}

\addvspace{5mm}

We then prove the uniqueness of the coefficients on the boundary.
\begin{proof}[Proof of Theorem {\rm \ref{thm1.1}}]
It follows from \eqref{18}--\eqref{20} that the Lam\'{e} coefficients $\lambda$ and $\mu$ only appear in the $n\times n$ submatrices. In Lam\'{e} system, the uniqueness of $\frac{\partial^{|J|} \lambda}{\partial x^J}$ and $\frac{\partial^{|J|} \mu}{\partial x^J}$ on the boundary for all multi-indices $J$ have been proved in \cite{TanLiu22}. Clearly, this particular result also holds in thermoelastic system and the proof is the same as that of \cite{TanLiu22}. Thus we only need to prove the uniqueness of the coefficients $\alpha$ and $\beta$ on the boundary. 

From \eqref{18} we know that the $(n+1,n+1)$-entry of $p_1$ is
\begin{align*}
    (p_1)^{n+1}_{n+1}=\alpha|\xi^{\prime}|.
\end{align*}
This shows that $p_1$ uniquely determines $\alpha$ on the boundary. Furthermore, the tangential derivatives $\frac{\partial \alpha}{\partial x_\gamma}$ for $1\leqslant \gamma \leqslant n-1$ can also be uniquely determined by $p_1$ on the boundary.

Using the method in \cite{TanLiu22} we solve \eqref{2.10} and obtain
\begin{align*}
    q_0&= \tilde{q_0}+\frac{1}{2|\xi^{\prime}|}E_1^{\prime} - \frac{\lambda+\mu}{4(\lambda+3\mu)|\xi^{\prime}|^2}(F_2E_1^{\prime}+E_1^{\prime}F_1) - \frac{(\lambda+\mu)^2}{4(\lambda+3\mu)^2|\xi^{\prime}|^3}F_2E_1^{\prime}F_1,
\end{align*}
where $\tilde{q_0}$ is the solution of the corresponding equation with constant coefficients (see \cite[p.\,13]{LiuTan22.2}), $F_1$ and $F_2$ are given by \eqref{3.1} and \eqref{3.2}, respectively.
\begin{align*}
    E_1^{\prime}=b_0^{\prime}q_1-c_1^{\prime}.
\end{align*}
Here
\begin{align*}
    b_0^{\prime}=\begin{bmatrix}
        \frac{1}{\mu} \frac{\partial \mu}{\partial x_{n}} I_{n-1} & \frac{1}{\mu} [\nabla^\alpha \lambda] & 0 \\[2mm]
        \frac{1}{\lambda+2\mu} \big[\frac{\partial \mu}{\partial x_{\beta}}\big] & \frac{1}{\lambda+2\mu} \frac{\partial (\lambda+2\mu)}{\partial x_{n}} & 0\\[2mm]
        0 & 0 & 0
    \end{bmatrix}
\end{align*}
and
\begin{align*}
    c_1^{\prime}=i
    \begin{bmatrix}
        \frac{1}{\mu} (\xi_{\alpha} \nabla^\alpha \mu) I_{n-1} + \frac{1}{\mu} \big[\xi_{\beta}\nabla^\alpha \lambda  + \xi^\alpha \frac{\partial \mu}{\partial x_{\beta}}\big]& \frac{1}{\mu} \frac{\partial \mu}{\partial x_{n}} [\xi^\alpha]  & 0\\[2mm]
        \frac{1}{\lambda+2\mu} \frac{\partial \lambda}{\partial x_{n}} [\xi_{\beta}] & \frac{1}{\lambda+2\mu} \xi_{\alpha} \nabla^{\alpha}\mu & 0\\[2mm]
        0 & 0 & 0
    \end{bmatrix}.
\end{align*}
Hence, we see that $q_0$ has the form (see \cite[p.\,13]{LiuTan22.2})
\begin{align}\label{3.09}
    q_0=
    \begin{bmatrix}
        * & * & \frac{i\beta}{(\lambda+3\mu)|\xi^{\prime}|}[\xi_\alpha] \\[2mm]
        * & * & -\frac{\beta}{\lambda+3\mu} \\[2mm]
        \frac{\mu\omega\beta\theta_0}{\alpha(\lambda+3\mu)|\xi^{\prime}|}[\xi_\beta] & \frac{i\mu\omega\beta\theta_0}{\alpha(\lambda+3\mu)} & *
    \end{bmatrix},
\end{align}
where $*$ denotes the terms which we do not care (of course, they can be computed explicitly).

Therefore, combining \eqref{3.09}, \eqref{3.02} and \eqref{3.08} we get the $(n,n+1)$-entry $(p_0)^n_{n+1}$, that is,
\begin{align*}
    (p_0)^n_{n+1}=\beta-\frac{\beta(\lambda+2\mu)}{\lambda+3\mu}=\frac{\beta\mu}{\lambda+3\mu}.
\end{align*}
This implies that $p_0$ uniquely determines $\beta$ on the boundary and the tangential derivatives $\frac{\partial \beta}{\partial x_\gamma}$ on the boundary for $1\leqslant \gamma \leqslant n-1$ since $\lambda$ and $\mu$ have been determined on the boundary by the previous arguments.

According to the above discussion, we see from \eqref{3.02} that $q_0$ is uniquely determined by $p_0$ since the boundary values of $\lambda,\mu,\alpha$ and $\beta$ have been uniquely determined. By \eqref{2.10} we can determine $E_1$ from the knowledge of $q_0$. For $k\geqslant 0$, we denote by $\mathcal{T}_{-k}=\mathcal{T}_{-k}(\lambda,\mu,\alpha,\beta)$ the terms which involve only the boundary values of $\lambda,\mu,\alpha,\beta$ and their normal derivatives of order ar most $k$ (which have been uniquely determined). Note that $\mathcal{T}_{-k}$ may be different in different expressions. 

From \eqref{2.11}, we have
\begin{align}\label{3.06}
    E_1=b_0q_1+\frac{\partial q_1}{\partial x_n}-c_1+\mathcal{T}_0.
\end{align}
By \eqref{3.03} and \eqref{3.04} we know that $q_{-1}$ is uniquely determined by $p_{-1}$, and $E_0$ can be determined from the knowledge of $q_{-1}$. From \eqref{3.05} we see that
\begin{align}\label{3.11}
    E_0=\frac{\partial q_0}{\partial x_n}+\mathcal{T}_{-1}.
\end{align}
From \eqref{3.09} we find that the $(n,n+1)$-entry $(\frac{\partial q_0}{\partial x_n})^n_{n+1}$ and the $(n+1,n)$-entry $(\frac{\partial q_0}{\partial x_n})^{n+1}_n$ of $\frac{\partial q_0}{\partial x_n}$ are, respectively,
\begin{align}
    \label{3.12} \Big(\frac{\partial q_0}{\partial x_n}\Big)^n_{n+1} &= -\frac{\frac{\partial \beta}{\partial x_n}(\lambda+3\mu)-\beta(\frac{\partial \lambda}{\partial x_n}+3\frac{\partial \mu}{\partial x_n})}{(\lambda+3\mu)^2} \notag\\
    &=-\frac{1}{\lambda+3\mu}\frac{\partial \beta}{\partial x_n}+\mathcal{T}_{-1},\\
    \label{3.13} \Big(\frac{\partial q_0}{\partial x_n}\Big)^{n+1}_n &= \frac{-\beta\mu(\lambda+3\mu)\frac{\partial \alpha}{\partial x_n}+\alpha\mu(\lambda+3\mu)\frac{\partial \beta}{\partial x_n}+\alpha\beta(\lambda\frac{\partial \mu}{\partial x_n}-\mu\frac{\partial \lambda}{\partial x_n})}{\alpha^2(\lambda+3\mu)^2} \notag\\
    &=-\frac{\beta\mu}{\alpha^2(\lambda+3\mu)}\frac{\partial \alpha}{\partial x_n} + \frac{\mu}{\alpha(\lambda+3\mu)}\frac{\partial \beta}{\partial x_n}+\mathcal{T}_{-1}.
\end{align}
Since $\alpha,\beta,\lambda,\mu,\frac{\partial \lambda}{\partial x_n}$ and $\frac{\partial \mu}{\partial x_n}$ have been determined on the boundary, then $\frac{\partial \beta}{\partial x_n}$ can be determined by $(\frac{\partial q_0}{\partial x_n})^n_{n+1}$ on the boundary, and $\frac{\partial \alpha}{\partial x_n}$ can be determined by $(\frac{\partial q_0}{\partial x_n})^{n+1}_n$ on the boundary. This implies that $p_{-1}$ uniquely determines $\frac{\partial \alpha}{\partial x_n}$ and $\frac{\partial \beta}{\partial x_n}$ on the boundary.

By \eqref{2.10} we have
\begin{align*}
    (q_1-b_1)\frac{\partial q_0}{\partial x_n}+\frac{\partial q_0}{\partial x_n}q_1=\frac{\partial E_1}{\partial x_n}+\mathcal{T}_{-1}.
\end{align*}
This implies that $\frac{\partial E_1}{\partial x_n}$ can be determined from the knowledge of $\frac{\partial q_0}{\partial x_n}$. By \eqref{3.03} and \eqref{4.2} we know that $q_{-2}$ is uniquely determined by $p_{-2}$, and $E_{-1}$ can be determined from the knowledge of $q_{-2}$. From \eqref{4.1} we see that
\begin{align*}
    E_{-1}=\frac{\partial q_{-1}}{\partial x_n}+\mathcal{T}_{-2}.
\end{align*}
By \eqref{4.2} we have
\begin{align*}
    (q_1-b_1)\frac{\partial q_{-1}}{\partial x_n}+\frac{\partial q_{-1}}{\partial x_n}q_1=\frac{\partial E_0}{\partial x_n}+\mathcal{T}_{-2}.
\end{align*}
This implies that $\frac{\partial E_0}{\partial x_n}$ can be determined from the knowledge of $\frac{\partial q_{-1}}{\partial x_n}$. From \eqref{3.11} we have
\begin{align*}
    \frac{\partial E_0}{\partial x_n}=\frac{\partial^2 q_0}{\partial x_n^2} + \mathcal{T}_{-2}.
\end{align*}
Thus, it follows from \eqref{3.12} and \eqref{3.13} that
\begin{align*}
    \Big(\frac{\partial^2 q_0}{\partial x_n^2}\Big)^n_{n+1} 
    &=-\frac{1}{\lambda+3\mu}\frac{\partial^2 \beta}{\partial x_n^2}+\mathcal{T}_{-2},\\
    \Big(\frac{\partial^2 q_0}{\partial x_n^2}\Big)^{n+1}_n 
    &=-\frac{\beta\mu}{\alpha^2(\lambda+3\mu)}\frac{\partial^2 \alpha}{\partial x_n^2} + \frac{\mu}{\alpha(\lambda+3\mu)}\frac{\partial^2 \beta}{\partial x_n^2}+\mathcal{T}_{-2}.
\end{align*}
Since $\lambda,\mu,\alpha,\beta,\frac{\partial \lambda}{\partial x_n},\frac{\partial \mu}{\partial x_n},\frac{\partial^2 \lambda}{\partial x_n^2},\frac{\partial^2 \mu}{\partial x_n^2},\frac{\partial \alpha}{\partial x_n}$ and $\frac{\partial \beta}{\partial x_n}$ have been determined on the boundary, then $\frac{\partial^2 \beta}{\partial x_n^2}$ can be determined by $(\frac{\partial^2 q_0}{\partial x_n^2})^n_{n+1}$ on the boundary, and $\frac{\partial^2 \alpha}{\partial x_n^2}$ can be determined by $(\frac{\partial^2 q_0}{\partial x_n^2})^{n+1}_n$ on the boundary. This implies that $p_{-2}$ uniquely determines $\frac{\partial \alpha^2}{\partial x_n^2}$ and $\frac{\partial^2 \beta}{\partial x_n^2}$ on the boundary.

Finally, we consider $p_{-m-1}$ for $m\geqslant 1$. By \eqref{3.03} and \eqref{4.2} we have $p_{-m-1}$ uniquely determines $q_{-m-1}$, and $E_{-m}$ can be determined from the knowledge of $q_{-m-1}$. From \eqref{4.1} we obtain
\begin{align*}
    E_{-m}=\frac{\partial q_{-m}}{\partial x_n}+\mathcal{T}_{-m-1}.
\end{align*}
We see from \eqref{4.2} that
\begin{align*}
    (q_1-b_1)\frac{\partial q_{-m}}{\partial x_n}+\frac{\partial q_{-m}}{\partial x_n}q_1=\frac{\partial E_{-m+1}}{\partial x_n}+\mathcal{T}_{-m-1}.
\end{align*}
This implies that $\frac{\partial E_{-m+1}}{\partial x_n}$ can be determined from the knowledge of $\frac{\partial q_{-m}}{\partial x_n}$. 

We end this proof by induction. Suppose we have shown that, by iteration, $E_{-m}$ uniquely determines
\begin{align}\label{3.14}
    \frac{\partial^m E_0}{\partial x_n^m}=\frac{\partial^{m+1} q_0}{\partial x_n^{m+1}}+\mathcal{T}_{-m-1},
\end{align}
which further determines $\frac{\partial^{m+1} \alpha}{\partial x_n^{m+1}}$ and $\frac{\partial^{m+1} \beta}{\partial x_n^{m+1}}$ on the boundary since we have
\begin{align*}
    \Big(\frac{\partial^{m+1} q_0}{\partial x_n^{m+1}}\Big)^n_{n+1} 
    &=-\frac{1}{\lambda+3\mu}\frac{\partial^{m+1} \beta}{\partial x_n^{m+1}}+\mathcal{T}_{-m-1},\\
    \Big(\frac{\partial^{m+1} q_0}{\partial x_n^{m+1}}\Big)^{n+1}_n 
    &=-\frac{\beta\mu}{\alpha^2(\lambda+3\mu)}\frac{\partial^{m+1} \alpha}{\partial x_n^{m+1}} + \frac{\mu}{\alpha(\lambda+3\mu)}\frac{\partial^{m+1} \beta}{\partial x_n^{m+1}}+\mathcal{T}_{-m-1}.
\end{align*}

By \eqref{3.03} and \eqref{4.2} we know that $q_{-m-2}$ is uniquely determined by $p_{-m-2}$, and $E_{-m-1}$ can be determined from the knowledge of $q_{-m-2}$. Hence, $E_{-m-1}$ uniquely determines $\frac{\partial^{m+2} q_0}{\partial x_n^{m+2}}$ by iteration. It follows that
\begin{align*}
    \Big(\frac{\partial^{m+2} q_0}{\partial x_n^{m+2}}\Big)^n_{n+1} 
    &=-\frac{1}{\lambda+3\mu}\frac{\partial^{m+2} \beta}{\partial x_n^{m+2}}+\mathcal{T}_{-m-2},\\
    \Big(\frac{\partial^{m+2} q_0}{\partial x_n^{m+2}}\Big)^{n+1}_n 
    &=-\frac{\beta\mu}{\alpha^2(\lambda+3\mu)}\frac{\partial^{m+2} \alpha}{\partial x_n^{m+2}} + \frac{\mu}{\alpha(\lambda+3\mu)}\frac{\partial^{m+2} \beta}{\partial x_n^{m+2}}+\mathcal{T}_{-m-2}.
\end{align*}
This implies that $p_{-m-2}$ uniquely determines $\frac{\partial^{m+2} \alpha}{\partial x_n^{m+2}}$ and $\frac{\partial^{m+2} \beta}{\partial x_n^{m+2}}$ on the boundary.

Therefore, by combining the uniqueness result of $\frac{\partial^{|J|} \lambda}{\partial x^J}$, $\frac{\partial^{|J|} \mu}{\partial x^J}$ (see \cite{TanLiu22}) and the above arguments, we conclude that the thermoelastic Dirichlet-to-Neumann map $\Lambda_g$ uniquely determines $\frac{\partial^{|J|} \lambda}{\partial x^J}$, $\frac{\partial^{|J|} \mu}{\partial x^J}$, $\frac{\partial^{|J|} \alpha}{\partial x^J}$ and $\frac{\partial^{|J|} \beta}{\partial x^J}$ on the boundary for all multi-indices $J$.
\end{proof}

\addvspace{10mm}

\section{Global uniqueness of real analytic coefficients}\label{s4}

\addvspace{5mm}

This section is devoted to proving the global uniqueness of real analytic coefficients $\lambda,\mu,\alpha$ and $\beta$ on a real analytic manifold. More precisely, we prove that the thermoelastic Dirichlet-to-Neumann map $\Lambda_g$ uniquely determines the real analytic coefficients on the whole manifold $\bar{M}$. 

We recall that the definitions of real analytic functions and real analytic hypersurfaces of a Riemannian manifold. Let $f(x)$ be a real-valued function defined on an open set $\Omega\subset\mathbb{R}^n$. For $y\in\Omega$ we call $f(x)$ real analytic at $y$ if there exist $a_J \in\mathbb{R}$ and a neighborhood $N_y$ of $y$ such that
\begin{align*}
    f(x)=\sum_{J} a_J(x-y)^J
\end{align*}
for all $x\in N_y$ and $J\in\mathbb{N}^n$. We say $f(x)$ is real analytic on an open set $\Omega$ if $f(x)$ is real analytic at each $y\in\Omega$.

Let $(M,g)$ be a Riemannian manifold. A subset $U$ of $M$ is said to be an $(n-1)$-dimensional real analytic hypersurface if $U$ is nonempty and if for every point $x\in U$, there is a real analytic diffeomorphism of an unit open ball $B(0,1)\subset\mathbb{R}^n$ onto an open neighborhood $N_x$ of $x$ such that $B(0,1)\cap\{x\in\mathbb{R}^n|x_n = 0\}$ maps onto $N_x\cap U$.

\addvspace{3mm}

In order to prove Theorem \ref{thm1.2}, we need the following lemma (see \cite[p.\,65]{John82}).
\begin{lemma}\label{lem3.1}
    $($Unique continuation of real analytic functions$)$ Let $ M\subset\mathbb{R}^n$ be a connected open set and $f(x)$ be a real analytic function defined on $ M$. Let $y\in M$. Then $f(x)$ is uniquely determined in $ M$ if we know $\frac{\partial^{|J|}f(y)}{\partial x^J}$ for all $J\in\mathbb{N}^n$. In particular, $f(x)$ is uniquely determined in $ M$ by its values in any nonempty open subset of $ M$.
\end{lemma}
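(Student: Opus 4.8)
The plan is to reduce the statement to showing that a real analytic function all of whose partial derivatives vanish at a single point must vanish identically on a connected domain, and then to propagate this local conclusion globally by a connectedness argument. Suppose $f$ and $\tilde{f}$ are both real analytic on $M$ with $\frac{\partial^{|J|} f}{\partial x^J}(y) = \frac{\partial^{|J|} \tilde{f}}{\partial x^J}(y)$ for every $J \in \mathbb{N}^n$. Setting $h = f - \tilde{f}$, which is again real analytic, it suffices to prove that if all partial derivatives of $h$ vanish at $y$, then $h \equiv 0$ on $M$.

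First I would record the local fact built into real analyticity: at any point $x_0 \in M$ the function $h$ coincides with its Taylor series $\sum_{J} \frac{1}{J!} \frac{\partial^{|J|} h}{\partial x^J}(x_0)(x - x_0)^J$ on some neighborhood $N_{x_0}$ of $x_0$. Consequently, if every partial derivative of $h$ vanishes at $x_0$, then the Taylor series is identically zero and hence $h \equiv 0$ throughout $N_{x_0}$; in particular every partial derivative of $h$ then vanishes at every point of $N_{x_0}$ as well.

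Next I would globalize via connectedness. Define $S = \{x \in M : \frac{\partial^{|J|} h}{\partial x^J}(x) = 0 \text{ for all } J \in \mathbb{N}^n\}$. The set $S$ is nonempty, since $y \in S$ by hypothesis, and it is closed in $M$, being the intersection $\bigcap_{J} (\partial^J h)^{-1}(\{0\})$ of preimages of the closed set $\{0\}$ under continuous maps. The local fact above shows that $S$ is also open: if $x_0 \in S$, then $h$ vanishes on an entire neighborhood $N_{x_0}$, whence $N_{x_0} \subset S$. Since $M$ is connected, a nonempty subset that is both open and closed must be all of $M$; therefore $S = M$, so $h \equiv 0$ and $f = \tilde{f}$ on $M$.

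Finally I would deduce the ``in particular'' clause. If $f$ and $\tilde{f}$ agree on a nonempty open subset $V \subset M$, then fixing any $y \in V$ all partial derivatives of $f$ and $\tilde{f}$ coincide at $y$, and the first part forces $f = \tilde{f}$ on all of $M$. The only delicate point is the local step, where one must use that the Taylor series of a real analytic function genuinely converges back to the function on a neighborhood, rather than merely that the function is smooth; this is precisely the content of the definition of real analyticity recalled just before the lemma, so the remainder of the argument is elementary point-set topology.
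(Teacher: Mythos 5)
Your proof is correct. Note that the paper never proves this lemma at all---it simply cites it from John's book \cite[p.\,65]{John82}---so there is no in-paper argument to compare against; your clopen/connectedness argument (the set where all partial derivatives of $f-\tilde{f}$ vanish is nonempty, open by local power-series expansion, and closed by continuity, hence all of the connected set $M$) is exactly the standard textbook proof of this unique continuation property and completely fills the gap the paper leaves to the reference.
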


Note that Lemma \ref{lem3.1} still holds for real analytic functions defined on real analytic manifolds. Finally, we prove Theorem \ref{thm1.2}.

\addvspace{5mm}

\begin{proof}[Proof of Theorem {\rm \ref{thm1.2}}]
    According to Theorem \ref{thm1.1}, it has been proved that the thermoelastic Dirichlet-to-Neumann map $\Lambda_g$ uniquely determines $\frac{\partial^{|J|} \lambda}{\partial x^J}$, $\frac{\partial^{|J|} \mu}{\partial x^J}$, $\frac{\partial^{|J|} \alpha}{\partial x^J}$ and $\frac{\partial^{|J|} \beta}{\partial x^J}$ on the boundary for all multi-indices $J$. Hence, for any point $x_0\in\Gamma$, the coefficients can be uniquely determined in some neighborhood of $x_0$ by the analyticity of the coefficients on $M\cup\Gamma$. Furthermore, it follows from Lemma \ref{lem3.1} that the coefficients can be uniquely determined in $M$. Therefore, by combining Theorem \ref{thm1.1} we conclude that the coefficients $\lambda,\mu,\alpha$ and $\beta$ can be uniquely determined on $\bar{M}$ by the thermoelastic Dirichlet-to-Neumann map $\Lambda_g$.
\end{proof}

\addvspace{5mm}

\begin{remark}
    By applying the method of Kohn and Vogelius {\rm \cite{KohnVoge85}}, we can also prove that the thermoelastic Dirichlet-to-Neumann map $\Lambda_g$ uniquely determines the coefficients $\lambda,\mu,\alpha$ and $\beta$ on $\bar{M}$ provided the manifold and the coefficients are piecewise analytic.
\end{remark}

\addvspace{10mm}

\section*{Acknowledgements}

\addvspace{5mm}

This research was supported by National Natural Science Foundation of China (No. 12271031) and National Key Research and Development Program of China (No. \\2022YFC3310300).

\addvspace{10mm}

\addvspace{5mm}

\end{document}